\documentclass[12pt,reqno]{amsart}
\usepackage{fullpage}
\usepackage{colonequals}
\usepackage{times}
\usepackage{amsmath,amssymb,amsthm,url}
\usepackage[utf8]{inputenc}
\usepackage[alphabetic]{amsrefs}
\usepackage[english]{babel}
\usepackage{comment}
\usepackage{bbm}
\usepackage{enumerate}
\usepackage{bm}
\usepackage{graphicx}
\usepackage[colorlinks=true, pdfstartview=FitH, linkcolor=blue, citecolor=blue, urlcolor=blue]{hyperref}
\usepackage{tabstackengine}
\stackMath

\newtheorem{thm}{Theorem}[section]
\newtheorem{lem}[thm]{Lemma}
\newtheorem{prop}[thm]{Proposition}
\newtheorem{defn}[thm]{Definition}
\newtheorem{ex}[thm]{Example}
\newtheorem{cor}[thm]{Corollary}

\newtheorem{conj}[thm]{Conjecture}

 \theoremstyle{definition}
\newtheorem{rem}[thm]{Remark}

\newcommand{\C}{\mathbb{C}} 
\newcommand{\N}{\mathbb{N}}
\newcommand{\Z}{\mathbb{Z}}
\newcommand{\F}{\mathbb{F}}

%Remove MR Number in the bibliography
\AtBeginDocument{%
	\def\MR#1{}
}

\title{Van Lint--MacWilliams' conjecture and maximum cliques in Cayley graphs over finite fields}
\author{Shamil Asgarli}
\address{Department of Mathematics \\ University of British Columbia \\ 1984 Mathematics Road \\ Canada V6T 1Z2}
\email{sasgarli@math.ubc.ca}
\author{Chi Hoi Yip}
\address{Department of Mathematics \\ University of British Columbia \\ 1984 Mathematics Road \\ Canada V6T 1Z2}
\email{kyleyip@math.ubc.ca}
\subjclass[2020]{Primary 05C25, 11T30; Secondary 05C69, 11T24, 51E15}
\keywords{Cayley graph, Paley graph, Peisert graph, maximum clique, Erd\H{o}s-{K}o-{R}ado theorem}
\date{\today}

\begin{document}

\maketitle

\begin{abstract}
A well-known conjecture due to van Lint and MacWilliams states that if $A$ is a subset of $\F_{q^2}$ such that $0,1 \in A$, $|A|=q$, and $a-b$ is a square for each $a,b \in A$, then $A$ must be the subfield $\F_q$. This conjecture is often phrased in terms of the maximum cliques in Paley graphs. It was first proved by Blokhuis and later extended by Sziklai to generalized Paley graphs. In this paper, we give a new proof of the conjecture and its variants, and show how this Erd\H{o}s-{K}o-{R}ado property of Paley graphs extends to a larger family of Cayley graphs, which we call Peisert-type graphs. These results resolve the conjectures by Mullin and Yip.
\end{abstract}

\section{Introduction}\label{sect:intro}
Throughout the paper, $p$ denotes an odd prime, $q$ denotes a positive power of $p$, and $\F_q$ denotes the finite field with $q$ elements. For a finite field $\F_q$, we denote $\F_q^*=\F_q\setminus \{0\}$. 

In 1978, van Lint and MacWilliams \cite{vLM78} studied the vectors of minimum weight in certain generalized quadratic residue codes, and they proposed the following conjecture: if $A$ is a subset of $\F_{q^2}$ such that $0,1 \in A$, $|A|=q$, and $a-b$ is a square for each $a,b \in A$, then $A$ is the subfield $\F_q$. Although this conjecture is purely a statement about the structure of the subfield, it is often phrased in terms of the maximum cliques in Paley graphs of square order, and sometimes in terms of the analog of Erd\H{o}s-{K}o-{R}ado theorem for Paley graphs \cite{GM}*{Chapter 5}.  The conjecture was first completely proved by Blokhuis \cite{Blo84}; see Section~\ref{sect:background} for a brief history of the conjecture and related results by other authors.

In this paper, we give a new proof of this conjecture (for $p$ sufficiently large with respect to $\log_p q$), explore its variant for other Cayley graphs defined on finite fields, including Peisert graphs, and confirm two related conjectures by Mullin and the second author. In addition, we exhibit several new results on maximal cliques in both generalized Paley and Peisert graphs.

We recall a few relevant terms from graph theory. Given an abelian group $G$ and a set $S \subset G \setminus \{0\}$ with $S=-S$, the {\em Cayley graph} $\operatorname{Cay}(G,S)$ is
the graph whose vertices are elements of $G$, such that two vertices $g$ and $h$ are adjacent if and only if $g-h \in S$. Such a set $S$ is called a {\em connection set}, and the condition $S=-S$ guarantees that the graph is undirected. In this paper, we introduce a new family of Cayley graphs. 

\begin{defn}[Peisert-type graphs]\label{defn:peisert-type}
Let $q$ be an odd prime power. Let $S \subset \F_{q^2}^*$ be a union of at most $\frac{q+1}{2}$ cosets of $\F_q^*$ in $\F_{q^2}^*$ such that $\F_q^* \subset S$, that is,
\begin{equation}\label{coset}
S=c_1\F_q^* \cup c_2\F_q^* \cup \cdots \cup c_m \F_q^*,   
\end{equation}
where $c_1=1$ and $m \leq \frac{q+1}{2}$. Then the Cayley graph $X=\operatorname{Cay}(\F_{q^2}^+, S)$ is said to be a Peisert-type graph.
\end{defn}

Such graphs have been implicitly studied within a larger family of strongly regular Cayley graphs in \cite{BWX99}. In Lemma~\ref{Ptlem}, we will see that Paley graphs, Peisert graphs, and more general versions of such graphs, are all special instances of Peisert-type graphs. The precise definitions of these graphs will be given in Section~\ref{sect:background}.

In order to state our first main result, we review a few additional definitions. A {\em clique} in a graph $X$ is a subset of vertices in $X$ such that every two distinct vertices in the clique are adjacent. A {\em maximum clique} is a clique with the maximum size, while a {\em maximal clique} is a clique that is not contained in a strictly larger clique. The {\em clique number} of $X$, denoted by $\omega (X)$, is the size of a maximum clique in a graph $X$. We prove the following theorem on the structure of maximum cliques in Peisert-type graphs.

\begin{thm} \label{subspacethm}
Let $X$ be a Peisert-type graph of order $q^2$, where $q$ is a power of an odd prime $p$. Then $\omega(X)= q$, and any maximum clique in $X$ containing 0 is an $\F_p$-subspace of $\F_{q^2}$. 
\end{thm}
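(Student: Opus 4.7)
The inequality $\omega(X) \ge q$ is immediate: since $\F_q^* \subset S$, the subfield $\F_q \subset \F_{q^2}$ forms a clique of size $q$, because any two distinct elements of $\F_q$ differ by an element of $\F_q^* \subset S$. Hence it remains to establish the upper bound $\omega(X) \le q$ and the structural claim for maximum cliques containing $0$.

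My plan is to translate the problem into the classical \emph{direction problem} on the affine plane $AG(2,q)$. View $\F_{q^2}$ as a two-dimensional $\F_q$-vector space, so that the $q+1$ one-dimensional $\F_q$-subspaces correspond to the $q+1$ parallel classes of affine lines, which I will call \emph{directions}. Since $S = \bigcup_{i=1}^m c_i \F_q^*$ with $m \le (q+1)/2$, for any clique $A \subseteq \F_{q^2}$ and any distinct $a,b \in A$ the difference $a - b$ lies in one of $m$ prescribed cosets; equivalently, the set of directions determined by $A$ in $AG(2,q)$ has cardinality at most $m \le (q+1)/2$. Dually, for each of the at least $(q+1)/2$ cosets $d\F_q^*$ disjoint from $S$, every affine line in that direction is an independent set of $X$ of size $q$; in particular any clique meets each such line in at most one point.

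At this point I would invoke a suitably strong form of the R\'edei--Sz\H{o}nyi direction theorem for $AG(2,q)$ with $q = p^n$, such as the classification of Blokhuis, Ball, Brouwer, Storme, and Sz\H{o}nyi: a point set $U \subseteq AG(2,q)$ determining at most $(q+1)/2$ directions satisfies $|U| \le q$, with equality only when $U$ is a translate of an $n$-dimensional $\F_p$-subspace of $\F_{q^2}$. Applied to the clique $A$, this gives at once $|A| \le q$, hence $\omega(X) = q$; and when $|A| = q$ with $0 \in A$, the translate can be taken trivial, so $A$ is itself an $\F_p$-subspace of $\F_{q^2}$.

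The step I expect to be the main obstacle is pinning down and invoking the precise form of the direction theorem. For prime $q$, R\'edei's original bound already forces a set of $q$ points determining few directions to be a line; for $q = p^n$ with $n > 1$ there are intermediate $\F_{p^e}$-linear point sets ($1 \le e \le n$) that also determine relatively few directions, and one must verify that each such set is in particular $\F_p$-linear in order to arrive at the clean conclusion stated. A backup route is to first establish $\omega(X) \le q$ via the Hoffman ratio bound --- $\alpha(X) \ge q$ from the non-$S$-direction affine lines, combined with an eigenvalue computation for $X$ via additive characters of $\F_{q^2}^+$ showing $\alpha(X) \le q$, then clique--coclique for vertex-transitive graphs --- and then derive the $\F_p$-subspace structure of a size-$q$ clique $A$ separately, exploiting that $A$ meets each non-$S$-direction line through $0$ only at $0$ and applying a R\'edei polynomial argument to promote this into $\F_p$-linearity.
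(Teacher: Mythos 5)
Your proposal is correct and follows essentially the same route as the paper: both arguments identify the differences within a clique with at most $m\le (q+1)/2$ directions in $AG(2,q)$ and then invoke the R\'edei--Ball direction theorem to get $|C|\le q$ together with $K$-linearity (hence $\F_p$-linearity) in the case of equality; the paper's only extra care is to fix a basis $\{1,v\}$ with $v\notin S$ so that the clique is the graph of a function, which is precisely the normalization your ``main obstacle'' paragraph anticipates. The intermediate $\F_{p^e}$-linear cases you flag are harmless, exactly as you suspect, since any $\F_{p^e}$-subspace is in particular an $\F_p$-subspace.
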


Our second main result strengthens the conclusion of Theorem~\ref{subspacethm} under additional assumptions. 

\begin{thm}\label{subfieldthm}
Let $n \geq 2$ be an integer and $\varepsilon>0$ a real number. Let $X=\operatorname{Cay}(\F_{q^2}^+, S)$ be a Peisert-type graph, where $q=p^n$ and $p>4.1n^2/\epsilon^2$. Suppose that there is a nontrivial multiplicative character $\chi$ of $\F_{q^2}$, such that the set $\{\chi(x): x \in S\}$ is $\varepsilon$-lower bounded. Then in the Cayley graph $X$, the only maximum clique containing $0,1$ is the subfield $\F_q$.
\end{thm}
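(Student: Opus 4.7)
My plan is to combine Theorem~\ref{subspacethm} with a Weil-type character sum estimate over $\F_p$-subspaces of $\F_{q^2}$. Let $K$ be any maximum clique in $X$ containing $0$ and $1$. By Theorem~\ref{subspacethm}, $K$ is an $n$-dimensional $\F_p$-subspace of $\F_{q^2}$; in particular $\F_p \subset K$. Since $|K|=q$, showing $K=\F_q$ is equivalent to ruling out that $K$ is not an $\F_q$-subspace, because any one-dimensional $\F_q$-subspace containing $1$ must be $\F_q$ itself. So the task is to exclude the possibility that $K$ is an $\F_p$-subspace that is not $\F_q$-linear.

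The next step is to convert the clique condition into a character sum lower bound. Since $K$ is an additive subgroup, $K-K=K$, so $K^* \subset S$. Interpreting the $\varepsilon$-lower bounded hypothesis as the existence of a unit-modulus $\alpha\in\C$ with $\operatorname{Re}(\alpha\chi(x))\ge\varepsilon$ for all $x\in S$, and hence for all $x\in K^*$, I obtain
$$\Bigl|\sum_{x\in K^*}\chi(x)\Bigr|\ \ge\ \operatorname{Re}\Bigl(\alpha\sum_{x\in K^*}\chi(x)\Bigr)\ \ge\ \varepsilon(q-1).$$

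The heart of the proof is a matching upper bound for the same sum, valid whenever $K$ is not $\F_q$-linear. I would fix an $\F_p$-basis $(1,e_2,\dots,e_n)$ of $K$ and decompose
$$\sum_{x\in K}\chi(x)\ =\ \sum_{(a_2,\dots,a_n)\in\F_p^{n-1}}\ \sum_{a_1\in\F_p}\chi(a_1+a_2e_2+\dots+a_ne_n),$$
applying a Weil-type estimate to each inner sum $\sum_{a_1\in\F_p}\chi(a_1+\beta)$ with $\beta\in\F_{q^2}$ fixed. For non-degenerate $\beta$ this sum is at most roughly $2n\sqrt{p}$ in absolute value (either via Weil's bound on an appropriate curve defined by the order of $\chi$, or by a Gauss-sum manipulation passing to the dual $\F_p$-subspace $K^\perp$). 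Summing over the $p^{n-1}$ choices of $(a_2,\dots,a_n)$ and showing that the degenerate $\beta$'s---those for which the bound fails---force $K$ to lie in a coset of a proper subfield, one concludes
$$\Bigl|\sum_{x\in K^*}\chi(x)\Bigr|\ \le\ \frac{2n\,q}{\sqrt p}+O\!\left(\tfrac{q}{p}\right).$$
Combining the two bounds gives $\varepsilon(q-1)\le 2n q/\sqrt{p}+O(q/p)$, i.e., $p\le 4n^2/\varepsilon^2+o(1)$, contradicting $p>4.1n^2/\varepsilon^2$. Hence $K$ is an $\F_q$-subspace, and since $1\in K$ and $|K|=q$ we conclude $K=\F_q$.

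The main obstacle is the Weil-type bound itself: controlling $\sum_{a_1\in\F_p}\chi(a_1+\beta)$ as $\beta$ ranges through $\F_{q^2}$, identifying the degenerate $\beta$'s where no nontrivial cancellation occurs, and showing that such degeneracies arise precisely from the $\F_q$-linear structure we want to rule out. The small gap between the hypothesis constant $4.1$ and the natural threshold $4$ is exactly the slack needed to absorb the contribution of these degenerate terms and the lower-order error $O(q/p)$.
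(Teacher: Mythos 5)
Your proposal is correct and follows essentially the same route as the paper: subspace structure from Theorem~\ref{subspacethm}, the lower bound $\varepsilon(q-1)$ on $\bigl|\sum_{x\in C}\chi(x)\bigr|$ coming directly from the $\varepsilon$-lower-bounded hypothesis applied to $C\setminus\{0\}\subset S$, and an upper bound of $\tfrac{2n}{\sqrt{p}}q$ from a Weil-type estimate over the $\F_p$-subspace, with the constant $4.1$ absorbing the $\bigl(\tfrac{q}{q-1}\bigr)^2$ slack. The ``main obstacle'' you flag is precisely the content of Katz's Theorem~\ref{Katz} together with Reis's Theorem~\ref{Reis}, packaged in the paper as Corollary~\ref{charsumcor}, whose proof handles your degenerate $\beta$'s (elements of low degree) by noting they all lie in proper subfields, which are too few to matter when $C\neq\F_q$ --- so rather than re-deriving the line-by-line decomposition you sketch, you can simply cite these results.
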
 

Roughly speaking, a set $A$ is $\varepsilon$-lower bounded if the sum over any subset of $A$ is not too small. A precise definition of $\varepsilon$-lower bounded appears in Definition~\ref{defn:epsilon-bounded} in Section~\ref{sect:background}. The sharpness of Theorem~\ref{subfieldthm} is discussed in Example~\ref{ex:counter-example}. In Section~\ref{subsect:main-results}, as a straightforward application of Theorem~\ref{subfieldthm}, we will prove the van Lint--MacWilliams' conjecture for all sufficiently large $p$. 

We mention here one new application of Theorem~\ref{subfieldthm}. Mullin \cite{NM}*{Chapter 8} conjectured that every maximum clique in the Peisert graph of square order containing $0, 1$ must be the subfield $\F_q$. Note that this is an exact analogue of the van Lint--MacWilliams' conjecture for Peisert graphs. We prove Mullin's conjecture when $p$ is sufficiently large.

\begin{thm}\label{conjP*forlargep}
Let $q=p^n$ be a prime power such that $q\equiv 3 \pmod 4$. Assume that $p>8.2n^2$. Then the only maximum clique containing $0,1$ in the Peisert graph of order $q^2$ is given by the subfield $\F_q$.
\end{thm}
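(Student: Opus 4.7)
The plan is to derive Theorem~\ref{conjP*forlargep} as a direct application of Theorem~\ref{subfieldthm} by exhibiting a carefully chosen character. First, by Lemma~\ref{Ptlem}, the Peisert graph of order $q^2$ is a Peisert-type graph: with $g$ a primitive root of $\F_{q^2}$, its connection set is $S = \{g^j : j \equiv 0 \text{ or } 1 \pmod 4\}$, and since $q \equiv 3 \pmod 4$ forces $4 \mid q+1$, this $S$ is exactly the union of the $(q+1)/2$ cosets $g^{4k}\F_q^*$ and $g^{4k+1}\F_q^*$ with $0 \le k < (q+1)/4$, matching the format in Definition~\ref{defn:peisert-type}.

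Next, I would take $\chi$ to be a multiplicative character of $\F_{q^2}^*$ of exact order $4$; such a $\chi$ exists because $4 \mid q^2-1$. Its kernel is the unique cyclic subgroup of $\F_{q^2}^*$ of order $(q^2-1)/4$, and because $4 \mid q+1$ we have $(q-1) \mid (q^2-1)/4$, so $\F_q^* \subseteq \ker\chi$ and $\chi$ is trivial on $\F_q^*$. Writing $\chi(g) = i$ for a primitive $4$th root of unity, every element of $g^{4k}\F_q^*$ has $\chi$-value $1$ and every element of $g^{4k+1}\F_q^*$ has $\chi$-value $i$, so $\chi(S) = \{1, i\}$.

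Finally, the two-element set $\{1, i\}$ is $\varepsilon$-lower bounded (in the sense of Definition~\ref{defn:epsilon-bounded}) with $\varepsilon = 1/\sqrt{2}$: multiplying by $e^{-i\pi/4}$ sends both points to $e^{\pm i\pi/4}$, each with real part $\cos(\pi/4) = 1/\sqrt{2}$, and no larger $\varepsilon$ is possible for a two-point set with this spread. With this $\varepsilon$, the hypothesis $p > 4.1n^2/\varepsilon^2$ of Theorem~\ref{subfieldthm} becomes precisely $p > 8.2n^2$, so Theorem~\ref{subfieldthm} applies and delivers the conclusion that the only maximum clique containing $0,1$ is $\F_q$. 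The only real content of this plan is the character selection; the quartic character is natural because it exactly distinguishes the two groups of cosets that define the Peisert connection set, and the arithmetic of $1/\varepsilon^2$ is what makes the constant $8.2 n^2$ in the theorem statement sharp for this argument.
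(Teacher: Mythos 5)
Your proposal is correct and follows essentially the same route as the paper: identify the Peisert graph as a Peisert-type graph via Lemma~\ref{Ptlem}, choose the order-$4$ character with $\chi(g)=i$ so that $\chi(S)=\{1,i\}$, invoke the $\tfrac{1}{\sqrt{2}}$-lower boundedness from Example~\ref{P*ex}, and apply Theorem~\ref{subfieldthm} with $4.1n^2/\varepsilon^2 = 8.2n^2$. The extra verifications you supply (that $4\mid q+1$ makes $\chi$ constant on each coset $g^j\F_q^*$ and trivial on $\F_q^*$) are correct and merely make explicit what the paper leaves implicit.
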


Another objective of the paper is to prove new results on the structure of \emph{maximal} cliques in Paley and Peisert graphs. While these results are not direct applications of our main Theorem~\ref{subfieldthm}, their proofs share similar ideas involving character sum estimates and the subspace structure. In particular, we confirm the conjecture of the second author on the maximal cliques in Peisert graphs of order $q^4$. 

\begin{thm}[\cite{Yip3}*{Conjecture 4.4}]\label{conjmaxc}
If $q$ is a power of a prime $p \equiv 3 \pmod 4$  and $q>3$, then $\F_{q}$ is a maximal clique in the Peisert graph of order $q^4$.
\end{thm}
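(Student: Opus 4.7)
The plan is to argue by contradiction. Suppose $\F_q$ is not a maximal clique, so there exists $\alpha \in \F_{q^4}\setminus \F_q$ such that $\F_q\cup\{\alpha\}$ is a clique in the Peisert graph of order $q^4$. Write the connection set as $S = C_0 \cup gC_0$, where $C_0 \subset \F_{q^4}^*$ is the group of fourth powers and $\chi$ is the quartic character on $\F_{q^4}^*$ with $\chi(g) = i$. First I would verify that $\F_q$ is itself a clique: since $(q^4-1)/(q-1) = q^3+q^2+q+1$ is divisible by $4$ for every odd prime power $q$, we have $\F_q^*\subset C_0 \subset S$.

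The key preliminary step is identifying the subgraph induced on $\F_{q^2}$. Since the index of $\F_{q^2}^*$ in $\F_{q^4}^*$ is $q^2+1\equiv 2\pmod{4}$, a short computation with a generator shows $\chi|_{\F_{q^2}^*}$ coincides with the quadratic character $\eta_{q^2}$ of $\F_{q^2}^*$, hence takes only the values $\pm 1$. Consequently $\F_{q^2}^*\cap gC_0 = \emptyset$, so $S\cap\F_{q^2}^*$ is exactly the set of squares in $\F_{q^2}^*$, and the induced subgraph on $\F_{q^2}$ coincides with the Paley graph of order $q^2$. This immediately settles Case~A ($\alpha\in\F_{q^2}\setminus\F_q$), since it would produce a $(q+1)$-clique in the Paley graph of order $q^2$, contradicting the van Lint--MacWilliams theorem (Blokhuis) which gives clique number $q$.

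Case~B ($\alpha\in\F_{q^4}\setminus\F_{q^2}$) is the core of the proof. I would invoke Theorem~\ref{subspacethm} to embed $\F_q\cup\{\alpha\}$ into a maximum clique $V$ of size $q^2$; since $0\in V$, this $V$ is an $\F_p$-subspace of $\F_{q^4}$ of $\F_p$-dimension $2n$. The previous paragraph then forces $V\cap\F_{q^2}=\F_q$. The Frobenius $\phi\colon x\mapsto x^{q^2}$ is a graph automorphism of the Peisert graph because $q^2\equiv 1\pmod 4$ gives $\chi(x^{q^2})=\chi(x)^{q^2}=\chi(x)$. Decomposing the $\phi$-invariant subclique $V\cap V^{q^2}\supset\F_q$ along the $\pm 1$-eigenspaces $\F_{q^2}$ and $E_{-1}:=\{x\in\F_{q^4}:x^{q^2}=-x\}$ should reduce the problem to the existence of some $u_0 \in E_{-1}$ with $u_0^2\in \F_{q^2}^*$ a non-square and $u_0+\F_q\subset S$. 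The identity $(u_0+\beta)(u_0-\beta)=u_0^2-\beta^2\in\F_{q^2}^*$ then forces $\chi(u_0+\beta)\chi(u_0-\beta)\in\{\pm 1\}$, so $\chi(u_0+\beta)=\chi(u_0-\beta)$ for every $\beta\in\F_q$. Combining this pairing symmetry with the fact that $u_0^q-u_0\in E_{-1}^*$ has $\chi$-value in $\{\pm i\}$, and the product identity $\chi(u_0^q-u_0)=i^b$ where $b=\#\{\beta\in\F_q:\chi(u_0+\beta)=i\}$, one concludes that $b$ is odd, so in particular $\chi(u_0)=i$; the final contradiction should then be extracted from a quantitative count on the pairs $\{u_0+\beta,u_0-\beta\}$ using $q>3$.

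The main obstacle is the last step of Case~B: converting the pairing symmetry and parity constraints into a genuine numerical contradiction. I expect this to require a careful analysis of the character sum $\sum_{\beta\in\F_q}\eta_{q^2}(u_0^2-\beta^2)$ (the quadratic character of a degree-$2$ polynomial with coefficients in $\F_{q^2}\setminus\F_q$), possibly via a Weil-type bound, combined with the divisibility constraint on $\chi(u_0^{q-1}-1)$ obtained by stripping the $\beta=0$ factor from the product. A subsidiary subtlety is that Theorem~\ref{subspacethm} identifies $V$ only as an $\F_p$-subspace, not automatically as Frobenius-stable; the case $V\neq V^{q^2}$ requires separately showing that the intersection $V\cap V^{q^2}$ is strictly larger than $\F_q$, which I would try to establish through dimension counts on $V+V^{q^2}$ and a further application of Theorem~\ref{subspacethm}.
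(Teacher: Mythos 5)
There is a genuine gap, and it occurs at the very first step of your Case~B. You propose to ``invoke Theorem~\ref{subspacethm} to embed $\F_q\cup\{\alpha\}$ into a maximum clique $V$ of size $q^2$.'' Theorem~\ref{subspacethm} does not support this in two respects. First, it only applies to Peisert-type graphs $\operatorname{Cay}(\F_{Q^2}^+,S)$ in which $S$ is a union of cosets of $\F_Q^*$ containing $\F_Q^*$; for $P^*_{q^4}$ viewed over $Q=q^2$, your own computation shows $M_{q^4}\cap\F_{q^2}^*$ is exactly the set of squares of $\F_{q^2}^*$, so $M_{q^4}$ neither contains $\F_{q^2}^*$ nor is a union of $\F_{q^2}^*$-cosets --- $P^*_{q^4}$ is \emph{not} a Peisert-type graph over $\F_{q^2}$ and the theorem says nothing about it. Second, even where it applies, Theorem~\ref{subspacethm} classifies cliques that are already maximum; it gives no mechanism for extending a $(q+1)$-clique to one of size $q^2$ (a maximal clique containing $\F_q\cup\{\alpha\}$ could a priori be much smaller than $\omega$). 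The paper fills exactly this hole by quoting Theorem~\ref{maxP*} (Theorem~1.7 of \cite{Yip3}): if $\F_q$ is not maximal, then some $\F_q\oplus h\F_q$ is a (maximum) clique. The underlying reason this works is multiplicative: $\F_q^*\subset(\F_{q^4}^*)^4$, so the connection set is stable under multiplication by $\F_q^*$ and a clique $\F_q\cup\{\alpha\}$ forces $\F_q\oplus\alpha\F_q$ to be a clique --- but that argument needs to be made (or cited), not derived from Theorem~\ref{subspacethm}.

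The second problem is that the endgame of Case~B is not closed, as you acknowledge: the Frobenius-eigenspace decomposition, the pairing identity $\chi(u_0+\beta)=\chi(u_0-\beta)$, and the parity constraint on $b$ do not by themselves yield a contradiction, and the incomplete sum $\sum_{\beta\in\F_q}\eta_{q^2}(u_0^2-\beta^2)$ you hope to control runs over the subfield $\F_q$ rather than all of $\F_{q^2}$, so the ordinary Weil bound does not apply --- you would need Katz-type estimates over subspaces anyway. The paper's route is far shorter: once $\mathcal{V}=\F_q\oplus h\F_q$ is a clique, every $x\in\mathcal{V}\setminus\{0\}$ has $\chi(x)\in\{1,i\}$, whence $\bigl|\sum_{x\in\mathcal{V}}\chi(x)\bigr|\geq(q^2-1)/\sqrt{2}$; since $\mathcal{V}$ is a $2$-dimensional $\F_q$-subspace and $\mathcal{V}\neq\F_{q^2}$ (because $\F_{q^2}$ is not a clique), Corollary~\ref{charsumcor} gives the upper bound $4q^{3/2}$, forcing $q<33$; the finitely many remaining values $q\in\{7,9,11,19,23,27,31\}$ are checked by computer. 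Note in particular that no uniform argument for all $q>3$ is given in the paper either, so your hope of extracting the contradiction ``using $q>3$'' alone is likely to fail for small $q$ without a computational verification.
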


The paper is organized as follows. In Section~\ref{sect:background}, we provide additional background, and further motivate the new definitions appearing in our paper. We also review the history of the van Lint--MacWilliams' conjecture, and put our main theorems in context. In Section~\ref{sect:tools}, we explain the two main techniques used in our proof. Specifically, we borrow results regarding the number of directions determined by a point set, and estimates on character sums over finite fields. In Section~\ref{sect:main-proofs}, we give proofs of Theorem~\ref{subspacethm} and Theorem~\ref{subfieldthm}; afterwards, we extend Theorem~\ref{conjP*forlargep} to generalized Peisert graphs. In Section~\ref{sect:maximal}, we prove Theorem~\ref{conjmaxc} and several other results on maximal cliques in certain Cayley graphs.

\section{Background and overview of the paper}\label{sect:background}

This section is organized into four subsections. In Section~\ref{subsect:history}, we give additional context for the conjecture due to van Lint and MacWilliams. In Section~\ref{subsect:peisert}, we recall the definitions of Peisert and generalized Peisert graphs, mention Mullin's conjecture, and describe what is currently known about the clique number of Peisert graphs. In Section~\ref{subsect:peisert-type}, we give further motivation for defining Peisert-type graphs. In Section~\ref{subsect:main-results}, we explain our main results in more detail, and deduce Theorem~\ref{conjP*forlargep} as a quick application.

\subsection{Van Lint--MacWilliams' conjecture and its variants}\label{subsect:history}

\begin{defn}
Let $q$ be a prime power satisfying $q \equiv 1 \pmod{4}$. The {\em Paley graph} on $\F_q$, denoted by $P_q$, is the Cayley graph $\operatorname{Cay}(\F_q^+,(\F_q^*)^2)$.
\end{defn}

In other words, $P_q$ is the graph whose vertices are the elements of $\F_q$, such that two vertices are adjacent if and only if the difference between two vertices is a square in $\F_q^{\ast}$.

Van Lint--MacWilliams' conjecture is often phrased as follows: in the Paley graph with order $q^2$, the only maximum clique containing $0,1$ is the subfield $\F_q$. Van Lint and MacWilliams \cite{vLM78} proved their conjecture in the special case $q=p$. A shorter proof for the case $q=p$ was obtained by Lov\'{a}sz and Schrijver \cite{LS83} using R\'edei's result on the number of directions; see Theorem~\ref{Ball03} for a related discussion. The conjecture was first proved completely by Blokhuis \cite{Blo84}.

\begin{thm}[\cite{Blo84}] \label{Blo84}
If $q$ is an odd prime power, then in the Paley graph of order $q^2$, the only maximum clique containing $0,1$ is the subfield $\F_{q}$. 
\end{thm}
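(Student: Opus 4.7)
My plan is to reduce Theorem~\ref{Blo84} to the classical theorem on directions determined by point sets in $AG(2,q)$, generalizing the Lov\'{a}sz-Schrijver argument that settled the case $q = p$. Let $A \subset \F_{q^2}$ be any clique of size $q$ with $0,1 \in A$; the goal is to show $A = \F_q$. Fix $\alpha \in \F_{q^2} \setminus \F_q$ with $\alpha^2 = \delta$ a nonsquare of $\F_q$, and view $A$ as a set of $q$ points in $\F_q^2 \cong \F_{q^2}$ via the basis $\{1,\alpha\}$.

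For $a = x_a + y_a\alpha$ and $b = x_b + y_b\alpha$ in $A$ with $x_a \neq x_b$, the difference factors as $a - b = (x_a - x_b)(1 + m\alpha)$ with slope $m = (y_a - y_b)/(x_a - x_b) \in \F_q$. Since every element of $\F_q^*$ is a square in $\F_{q^2}^*$, the hypothesis that $a - b$ be a square forces $\chi(1 + m\alpha) = 1$, where $\chi$ is the quadratic character of $\F_{q^2}$. Writing $\chi = \eta \circ N_{\F_{q^2}/\F_q}$ with $\eta$ the quadratic character of $\F_q$, and using $N(1 + m\alpha) = 1 - \delta m^2$, the standard quadratic character sum evaluates to $|\{m \in \F_q : \eta(1 - \delta m^2) = 1\}| = (q + \eta(-1))/2$. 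A brief case analysis on $q \bmod 4$ also controls whether $\alpha$ itself is a square, and hence whether the $\infty$-direction is admissible; combining the two, the set $D$ of directions determined by $A$ satisfies $|D| \leq (q+1)/2$ in every case. By Ball's direction theorem (R\'edei-Megyesi for $q$ prime), a set of $q$ points in $AG(2,q)$ with $|D| \leq (q+1)/2$ is either an affine line or an $\F_{p^e}$-linear set for some $e \mid n$, where $q = p^n$. In the line case, $0 \in A$ and $|A| = q$ force $A$ to be a one-dimensional $\F_q$-subspace of $\F_{q^2}$, and $1 \in A$ then yields $A = \F_q$.

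The main obstacle I expect is the remaining possibility that $A$ is an $\F_{p^e}$-linear subspace with $e < n$. The direction count alone cannot rule this out, since $\F_{p^e}^* \subset (\F_{q^2}^*)^2$ for every $e \mid n$, so additional input is needed to force $e = n$. I would pursue two complementary tactics: Blokhuis's lacunary-polynomial argument applied to $f(X) = \prod_{a \in A}(X - a)$, which handles every $q$ uniformly; and, in the spirit of the present paper's Theorem~\ref{subfieldthm}, a Weil-type character sum estimate that detects the multiplicative rigidity of $A$ and forces $e = n$ once $p$ is sufficiently large relative to $n$. Promoting $\F_{p^e}$-linearity to $\F_q$-linearity is where I expect the bulk of the technical work to lie.
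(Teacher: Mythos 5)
Your direction-counting reduction is essentially the route this paper takes: it is the proof of Theorem~\ref{subspacethm} specialized to the Paley graph. Your count of admissible slopes via $N(1+m\alpha)=1-\delta m^2$ and the conic point count is correct, and your conclusion $|D|\leq (q+1)/2$ matches the paper's bound $m\leq (q+1)/2$ on the number of cosets meeting $1+\F_q v$. (One small simplification the paper uses: choose the second basis vector $v$ to be a \emph{nonsquare} of $\F_{q^2}$ rather than a square root of a nonsquare of $\F_q$; then the vertical direction is automatically excluded and no case analysis on $q\bmod 4$ is needed before invoking Theorem~\ref{Ball03}.) You have also correctly located the genuine obstacle: Ball's theorem only yields that $A$ is $\F_{p^e}$-linear for some $e\mid n$, and since $\F_{p^e}^*\subset(\F_{q^2}^*)^2$ the direction count cannot distinguish $\F_q$ from other $\F_p$-subspaces. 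This is not a flaw you overlooked; it is the actual dividing line between what direction theory gives and what more must be done.

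Where your proposal falls short of the stated theorem is that neither of your two tactics for closing this gap is carried out, and only one of them can prove the statement as written. The character-sum tactic is exactly what the paper does (Theorem~\ref{subfieldthm} via Corollary~\ref{charsumcor}, resting on the Katz--Reis bound): if $C\neq\F_q$ is an $\F_p$-subspace of size $q=p^n$ with $C\setminus\{0\}$ contained in the squares, then $\left|\sum_{x\in C}\chi(x)\right|=|C|-1$ for the quadratic character $\chi$, while Corollary~\ref{charsumcor} forces $\left|\sum_{x\in C}\chi(x)\right|<\frac{2n}{\sqrt{p}}|C|$; this is a contradiction only when $p>4n^2(1+o(1))$. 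So this route proves Theorem~\ref{Blo84} only for $p$ sufficiently large relative to $n$ --- which is precisely the scope of the paper's own ``new proof''; for the full statement (all odd prime powers $q$) the paper simply cites Blokhuis. Your other tactic, the lacunary-polynomial analysis of $\prod_{a\in A}(X-a)$, is Blokhuis's original argument and would cover all $q$, but as proposed it is a pointer rather than a proof, and it is a genuinely different (and harder) piece of machinery than anything in this paper. In short: your sketch reproduces the paper's argument up to and including its known limitation, and the remaining step is either borrowed wholesale from Blokhuis or restricted to large $p$.
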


Later, Bruen and Fisher \cite{BF91} gave a different proof using the so-called ``Jamison method". Sziklai \cite{Szi99} generalized Blokhuis's proof and extended the conjecture to certain generalized Paley graphs; see Theorem \ref{Szi99}. See Remark~\ref{differenceinproof} for discussion on the proofs given by Blokhuis and Sziklai, and how they differ from the proof in the present paper.

One can define generalized Paley graphs in an analogous way. They were first introduced by Cohen \cite{SC} in 1988, and reintroduced by Lim and Praeger \cite{LP} in 2009. 

\begin{defn}
Let $d>1$ be a positive integer. If $q \equiv 1 \pmod {2d}$, the {\em $d$-Paley graph} on $\F_q$, denoted by $GP(q,d)$, is the Cayley graph  $\operatorname{Cay}(\F_q^+,(\F_q^*)^d)$, where $(\F_q^*)^d$ is the set of $d$-th powers in $\F_q^*$.
\end{defn} 
In the literature, $3$-Paley graphs and $4$-Paley graphs are also known as {\em cubic Paley graphs} and {\em quadruple Paley graphs}, respectively.

If $d \mid (q+1)$, the following theorem of Sziklai characterizes all maximum cliques in $GP(q^2,d)$.

\begin{thm}[\cite{Szi99}*{Theorem 1.2}] \label{Szi99}
If $q$ is an odd prime power and $d$ is a divisor of $(q+1)$ such that $d>1$, then in the generalized Paley graph $GP(q^2,d)$, the only maximum clique containing $0,1$ is the subfield $\F_{q}$; in particular, each maximum clique in $GP(q^2,d)$ is an affine transformation of the subfield $\F_{q}$.
\end{thm}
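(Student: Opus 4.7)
The plan is to deduce the result by combining the new subspace characterization of Theorem~\ref{subspacethm} with R\'edei's classical theorem on the number of directions determined by a point set in the affine plane $\mathrm{AG}(2,q)$.

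First I would check that $GP(q^2,d)$ falls into the Peisert-type framework of Definition~\ref{defn:peisert-type}. Since $d \mid (q+1)$, a quick order computation shows that every element of $\F_q^*$ is a $d$-th power in $\F_{q^2}^*$, so $\F_q^* \subseteq (\F_{q^2}^*)^d$. Because $(\F_{q^2}^*)^d$ has index $d$ in $\F_{q^2}^*$ and contains $\F_q^*$, it is a disjoint union of exactly $(q+1)/d$ cosets of $\F_q^*$, and $(q+1)/d \leq (q+1)/2$ since $d \geq 2$. Hence Theorem~\ref{subspacethm} applies and immediately gives $\omega(GP(q^2,d)) = q$, and further that every maximum clique containing $0$ is an $\F_p$-subspace of $\F_{q^2}$.

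Next, let $A$ be a maximum clique containing $0$ and $1$. After fixing an $\F_q$-linear isomorphism $\F_{q^2} \cong \F_q^2$, the set of directions determined by $A$ corresponds naturally to the image of $(A-A)\setminus\{0\}$ in $\F_{q^2}^*/\F_q^* \cong \mathbb{P}^1(\F_q)$. Since every nonzero difference of elements of $A$ lies in $(\F_{q^2}^*)^d$, which is a union of only $(q+1)/d$ cosets of $\F_q^*$, the clique $A$ determines at most
\[
\frac{q+1}{d} \;\leq\; \frac{q+1}{2} \;<\; \frac{q+3}{2}
\]
directions. R\'edei's direction theorem (together with its sharpening by Ball and others) then forces such a $q$-point set in $\mathrm{AG}(2,q)$ to be an $\F_q$-affine line; since $0 \in A$, the line is $\F_q$-linear, so $A = c\cdot \F_q$ for some $c \in \F_{q^2}^*$, and $1 \in A$ forces $c \in \F_q^*$, whence $A = \F_q$. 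The ``in particular'' clause then follows by translating an arbitrary maximum clique $C$ so that it contains $0$, then rescaling by the inverse of a nonzero element of $C$ — both operations preserve the clique property because $(\F_{q^2}^*)^d$ is a multiplicative subgroup.

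The main obstacle is really imported rather than overcome: the whole argument rests on the R\'edei direction theorem, which is a nontrivial but well-established black box. It is worth noting that Theorem~\ref{subspacethm} only delivers $\F_p$-linearity of $A$, which by itself is not strong enough, since an $\F_p$-subspace of size $q$ in $\F_{q^2}$ need not be $\F_q$-linear; the direction bound is precisely what bridges the gap from $\F_p$-linearity to $\F_q$-linearity, and hence to the subfield $\F_q$.
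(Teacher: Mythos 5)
There is a genuine gap at the decisive step. Your reduction to the Peisert-type framework and the direction count are fine: $(A-A)\setminus\{0\}\subseteq(\F_{q^2}^*)^d$, which is a union of $(q+1)/d$ cosets of $\F_q^*$, so $A$ determines at most $(q+1)/d\le (q+1)/2$ directions (this is exactly the mechanism in the proof of Theorem~\ref{subspacethm}). But the claim that R\'edei's theorem then ``forces such a $q$-point set in $\mathrm{AG}(2,q)$ to be an $\F_q$-affine line'' is false for non-prime $q$. The correct dichotomy (Theorem~\ref{Ball03}) is: either $N\ge\frac{q+3}{2}$, or the set is $K$-linear for \emph{some} subfield $K\subseteq\F_q$, with the additional bound $N\ge q/|K|+1$ when $K\ne\F_q$. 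Having $N\le\frac{q+1}{2}$ only places you in the second branch; it does not exclude proper subfields $K$, because $q/|K|+1\le\frac{q+1}{2}$ is perfectly consistent. For instance, in the Paley graph case $d=2$ with $q=p^2$, the bound $N\le\frac{p^2+1}{2}$ is compatible with $K=\F_p$, which only requires $N\ge p+1$; the graph of an $\F_p$-linear (but not $\F_q$-linear) map such as the Frobenius determines few directions without being a line. So your final paragraph is self-contradictory: you correctly observe that $\F_p$-linearity is not enough, but the direction bound is precisely what produced that $\F_p$-linearity and cannot be reused to upgrade it to $\F_q$-linearity. This gap is the entire difficulty of the van Lint--MacWilliams problem beyond the prime case (which is why Lov\'asz--Schrijver's direction argument works only for $q=p$).

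The paper bridges this gap differently: after Theorem~\ref{subspacethm} delivers the $\F_p$-subspace structure, it invokes character sum estimates over subspaces (Corollary~\ref{charsumcor}, built on Katz and Reis) to show that an $\F_p$-subspace $C$ with $C\setminus\{0\}$ contained in the $d$-th powers and $C\ne\F_q$ would make $\bigl|\sum_{x\in C}\chi(x)\bigr|$ simultaneously too large (it equals $|C|-1$ for a character $\chi$ of order $d$) and too small (bounded by $\frac{2n}{\sqrt p}|C|$). Even with this machinery the paper only recovers Theorem~\ref{Szi99} for $p$ sufficiently large; the full statement is Sziklai's and rests on a lacunary-polynomial argument exploiting that the connection set is a multiplicative subgroup. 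Your proposal, which claims the full statement with no lower bound on $p$, should therefore be treated as incomplete; the direction count alone can close the gap only when $(q+1)/d\le p^{n-k}$ for $k$ the largest proper divisor of $n$ (this is Corollary~\ref{cor:improvement}), which fails for general $d\ge 2$.
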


\begin{rem}
The assumption that $d \mid (q+1)$ in Theorem \ref{Szi99} is crucial. In the case $d \nmid (q+1)$, it is easy to verify that $\F_{q}$ does not form a clique in $GP(q^2,d)$ (see for example \cite{Yip4}*{Lemma 1.5}); moreover, the second author \cite{Yip4} recently proved that $\omega(GP(q^2,d))\leq q-1$. Therefore, it is unlikely that there is a nice characterization of maximum cliques in $GP(q^2,d)$, where $d \nmid (q+1)$.
\end{rem}

Sziklai \cite{Szi99} also proved the following embeddability result on maximum cliques. 

\begin{thm}[\cite{Szi99}*{Theorem 1.3}] \label{embed}
Let $q$ be an odd prime power and $d$ a divisor of $(q+1)$ such that $d \geq 3$. If $C$ is a clique in the generalized Paley graph $GP(q^2,d)$, such that $0,1 \in C$, and $|C|>q-(1-1/d)\sqrt{q}$, then $C \subset \F_q$. 
\end{thm}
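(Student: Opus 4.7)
The plan is to recast the clique condition as a restriction on the directions determined by $C$, viewed as a point set in $AG(2,q)$, and then apply a stability version of the R\'edei--Sz\H{o}nyi direction theorem.

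First I will fix an $\F_q$-basis of $\F_{q^2}$ and identify $\F_{q^2}$ with the affine plane $AG(2,q)$. Under this identification, the group $\F_q^*$ acts on $\F_{q^2}^*$ by scalar multiplication, partitioning $\F_{q^2}^*$ into $q+1$ cosets of $\F_q^*$; these cosets correspond to the $q+1$ points of $\mathbb{P}^1(\F_q)$, which I will call \emph{directions}. Because $d \mid q+1$ forces $\F_q^* \subset (\F_{q^2}^*)^d$, the subgroup $(\F_{q^2}^*)^d$ is a union of whole $\F_q^*$-cosets, and a coset count shows it consists of exactly $(q+1)/d$ such cosets, corresponding to a fixed set of $(q+1)/d$ directions. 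Since $C$ is a clique in $GP(q^2,d)$, every nonzero difference $c - c'$ with $c, c' \in C$ lies in $(\F_{q^2}^*)^d$, so $C$ determines at most $(q+1)/d$ directions in $AG(2,q)$.

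The second step is to invoke a stability version of the R\'edei--Sz\H{o}nyi direction theorem: a set $U \subset AG(2,q)$ with $|U| > q - (1-1/d)\sqrt{q}$ that determines at most $(q+1)/d$ directions, a quantity strictly less than $(q+3)/2$ because $d \geq 3$, must be contained in a single affine line. Applying this with $U = C$ places $C$ on a line $\ell$, which must pass through the origin because $0 \in C$, so $\ell$ is a $1$-dimensional $\F_q$-subspace of $\F_{q^2}$. Since $1 \in C \subset \ell$, one must have $\ell = \F_q \cdot 1 = \F_q$, yielding $C \subset \F_q$.

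The main obstacle will be establishing the precise stability statement with the sharp threshold $q - (1-1/d)\sqrt{q}$. The classical R\'edei--Lov\'asz--Schrijver theorem handles only sets of size exactly $q$ and rules out direction counts below $(q+3)/2$; to handle sets of size slightly less than $q$, one typically deploys Sz\H{o}nyi's Extension Lemma to complete $U$ to a $q$-point set with the same direction set, provided the deficit $q - |U|$ is small relative to the gap $(q+3)/2 - |D|$. Setting up this polynomial-method bookkeeping so that the numerics yield precisely the threshold $(1 - 1/d)\sqrt{q}$, and not merely a weaker $\sqrt{q}$-type bound, is the technical heart of the argument; I expect the factor $(1 - 1/d)$ to emerge from comparing the $(q+1)/d$ available directions against $(q+3)/2$ and then tracking how many ``missing'' points of $C$ can be interpolated while preserving the direction constraint.
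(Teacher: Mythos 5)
Your overall strategy --- embed $C$ in $AG(2,q)$, bound the number of determined directions by $(q+1)/d$ via the coset decomposition of $(\F_{q^2}^*)^d$, complete $C$ to a $q$-point set with the same direction set using Sz\H{o}nyi's extension lemma, and then invoke the direction theorem --- is exactly the route this paper takes for its analogous Theorem~\ref{stabilitythm}. The gap is in the conclusion you draw from the direction theorem. A set of $q$ points determining fewer than $(q+3)/2$ directions is \emph{not} forced to lie on an affine line: Theorem~\ref{Ball03} only says it is $K$-linear for \emph{some} subfield $K$ of $\F_q$, i.e., affinely equivalent to a $K$-subspace of $\F_{q^2}$. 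Only when $K=\F_q$ does one get a line. If $K$ is a proper subfield, the set is a higher-dimensional $K$-subspace, and the accompanying lower bound $N \geq q/|K|+1 \geq \sqrt{q}+1$ on the number of directions is perfectly compatible with your upper bound $(q+1)/d$ once $q$ is large relative to $d$, so direction counting alone cannot exclude this case. (Your argument does close up when $q$ is prime, since then $\F_q$ has no proper subfield --- this is essentially the Lov\'asz--Schrijver argument --- but the theorem is asserted for all odd prime powers $q$.)

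This is precisely why the paper's own direction-based argument stops at ``$C$ is contained in an $\F_p$-subspace of dimension $n$'' (Theorem~\ref{stabilitythm}) and needs a separate input --- the character sum estimate over subspaces (Corollary~\ref{charsumcor}) together with the $\varepsilon$-lower bounded hypothesis --- to upgrade that subspace to the subfield $\F_q$, and even then only for $p$ sufficiently large (Theorem~\ref{subfieldthm} and the corollary following it). The full statement of Theorem~\ref{embed}, with no lower bound on $p$, is Sziklai's, and his proof runs through lacunary polynomials, exploiting that the connection set $(\F_{q^2}^*)^d$ is a multiplicative subgroup (see Remark~\ref{differenceinproof}). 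To repair your proof you must either supply an argument ruling out the proper-subfield case $K \neq \F_q$, or settle for the weaker conclusion that $C$ lies in an $\F_p$-subspace (equivalently, obtain $C \subset \F_q$ only under an additional largeness assumption on $p$).
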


Many authors have attempted to give a new proof of van Lint--MacWilliams' conjecture. Godsil and Meagher \cite{GM} suggested an approach using algebraic graph theory, in particular the module method, to prove Theorem \ref{Blo84}. The module method is a powerful tool to prove Erd\H{o}s-{K}o-{R}ado type theorems; for more details, see the discussion in \cite{GM}*{Section 5.9}. Recently, Goryainov, Lin, and the authors \cite{AGLY22} have proved that all Peisert-type graphs satisfy the EKR-module property, which is the first step in applying the module method. In addition, the second author \cite{Yip4} gave a Fourier analytic characterization of maximum cliques, which suggests an alternative approach to prove Theorem \ref{Blo84}.

\subsection{Peisert graphs and generalized Peisert graphs}\label{subsect:peisert}
It is well-known that Paley graphs are self-complementary and symmetric. In \cite{WP2}, Peisert discovered a new infinite family of self-complementary symmetric graphs, which he called $P^*$-graphs. This new family of graphs also goes under the name of Peisert graphs in the literature. 

\begin{defn}
The {\em Peisert graph} of order $q=p^r$, where $p$ is a prime such that $p \equiv 3 \pmod 4$ and $r$ is even, denoted by $P^*_q$, is the Cayley graph $\operatorname{Cay}(\F_{q}^{+}, M_q)$ with $$M_q \colonequals \{g^j: j \equiv 0,1 \pmod 4\},$$ where $g$ is a primitive root of the field $\F_q$.
\end{defn}
While the construction of $P^*_q$ depends on the primitive root $g$, the isomorphism type of $P^*_q$ is independent of the choice of $g$ \cite{WP2}.  Note that $M_q$ is not closed under multiplication since $g \cdot g=g^2 \notin M_q$. The condition $p \equiv 3 \pmod 4$ is needed to ensure that $P_q^*$ is symmetric.

Kisielewicz and Peisert \cite{KP} showed that Paley graphs and Peisert graphs are similar in many aspects. However, we know very little about the structure of cliques of Peisert graphs other than Theorem \ref{KP}.

\begin{thm}[\cite{KP}*{Theorem 5.1}] \label{KP}
Let $q=p^s$, where $p \equiv 3 \pmod 4$ and $s=2k$. If $k$ is odd, then $\omega(P^*_q)= \sqrt{q}$ and the subfield $\F_{\sqrt{q}}$ forms a clique in $P^*_q$; if $k$ is even, then $\omega(P^*_q) \geq q^{1/4}$ and the subfield $\F_{q^{1/4}}$ forms a clique in $P^*_q$.
\end{thm}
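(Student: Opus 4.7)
The plan is to prove the theorem in two independent parts: (i) exhibit the claimed subfield as a clique in $P^*_q$ to get the lower bound; and (ii) in the $k$ odd case, match this with the Hoffman ratio bound to obtain equality.

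For (i), fix a primitive root $g$ of $\F_q$. A subfield $\F_{p^t} \subset \F_q$ forms a clique in $P^*_q$ exactly when $\F_{p^t}^* \subset M_q$. Since $\F_{p^t}^* = \langle g^d \rangle$ with $d := (q-1)/(p^t-1)$, a clean sufficient condition is $4 \mid d$: this forces each element $g^{id}$ of $\F_{p^t}^*$ to have exponent divisible by $4$, hence to lie in $M_q$. I would check divisibility by $4$ via the geometric expansion
\[
\frac{p^{2k}-1}{p^t - 1} = \sum_{j=0}^{2k/t - 1} p^{jt},
\]
reduced modulo $4$ using $p \equiv -1 \pmod 4$. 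When $t = k$ with $k$ odd, the sum is $p^k + 1 \equiv 0 \pmod 4$, so $\F_{\sqrt{q}}$ is a clique. When $t = k/2$ with $k$ even, the sum has four terms $p^{3k/2} + p^k + p^{k/2} + 1$; a short case analysis on the parity of $k/2$ (using $p^{2j} \equiv 1$ and $p^{2j+1} \equiv -1 \pmod 4$) shows in both sub-cases that the total is $\equiv 0 \pmod 4$, hence $\F_{q^{1/4}}$ is a clique.

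For (ii), I would invoke the fact that $P^*_q$ is strongly regular with the same parameters as the Paley graph of order $q$, namely $\operatorname{srg}(q,(q-1)/2,(q-5)/4,(q-1)/4)$, and smallest eigenvalue $(-1-\sqrt{q})/2$. The Hoffman ratio bound then gives $\alpha(P^*_q) \leq \sqrt{q}$, and since $P^*_q$ is self-complementary, $\omega(P^*_q) = \alpha(P^*_q) \leq \sqrt{q}$. Combined with (i), this yields $\omega(P^*_q) = \sqrt{q}$ when $k$ is odd.

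The main obstacle, though fairly mild, is the modular arithmetic bookkeeping in (i): one needs to track both the parity of $t$ and the quotient $2k/t$ carefully to isolate exactly when $4 \mid (p^{2k}-1)/(p^t-1)$. A more substantive subtlety is that in the $k$ even case the argument only produces the lower bound $\omega(P^*_q) \geq q^{1/4}$; indeed the same mod-$4$ computation with $t = k$ yields $d \equiv 2 \pmod 4$, so $\F_{\sqrt{q}}$ genuinely fails to be a clique and no obvious intermediate subfield closes the gap. This explains the asymmetry between the two cases in the statement.
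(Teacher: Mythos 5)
Your proposal is correct, but note that the paper offers no proof of this statement to compare against: Theorem~\ref{KP} is imported verbatim from Kisielewicz--Peisert \cite{KP} and used as a black box. Judged on its own merits, your argument is sound and is essentially the standard one. The reduction of ``$\F_{p^t}$ is a clique'' to ``$\F_{p^t}^*\subset M_q$'' is valid because the difference set of the additive group $\F_{p^t}$ is $\F_{p^t}^*$; the sufficient condition $4\mid d$ with $d=(q-1)/(p^t-1)$ is legitimate since $4\mid q-1$ makes the exponent class mod $4$ well defined; and your mod-$4$ evaluations of the geometric sums ($p^k+1\equiv 0$ for $k$ odd, and $1+p^{k/2}+p^k+p^{3k/2}\equiv 0$ in both parities of $k/2$) check out. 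For the upper bound, the ratio bound applied to the conference-graph parameters does give $\omega\le\sqrt q$; the only point you leave implicit is why $P^*_q$ is strongly regular, which follows from its being arc-transitive and self-complementary. A slightly cleaner route to the same inequality, avoiding strong regularity altogether, is the clique--coclique bound $\alpha(G)\,\omega(G)\le |V(G)|$ for vertex-transitive graphs combined with $\alpha=\omega$ from self-complementarity, giving $\omega^2\le q$ directly. Your closing observation that $d\equiv 2\pmod 4$ for $t=k$ with $k$ even, so that $g^d\in\F_{\sqrt q}^*\setminus M_q$ and $\F_{\sqrt q}$ is genuinely not a clique, is also correct and explains the asymmetry in the statement.
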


Motivated by Theorem \ref{Blo84} and some numerical evidence, Mullin proposed the following conjecture \cite{NM}*{Chapter 8}. Note that it is an analog of Theorem \ref{Blo84} on the characterization of maximum cliques in a Paley graph with square order. 

\begin{conj}[Mullin]\label{conjP*}
Let $q \equiv 3 \pmod 4$ be a prime power. Then the only maximum clique containing $0,1$ in the Peisert graph of order $q^2$ is given by the subfield $\F_q$.
\end{conj}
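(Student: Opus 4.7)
The plan is to recognize the Peisert graph $P^*_{q^2}$ as a Peisert-type graph in the sense of Definition~\ref{defn:peisert-type}, and then reduce Mullin's conjecture to Theorem~\ref{subfieldthm}.

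First, I would verify that $P^*_{q^2}$ fits Definition~\ref{defn:peisert-type}. Recall the connection set is $M_{q^2} = \{g^j : j \equiv 0, 1 \pmod{4}\}$, where $g$ is a primitive root of $\F_{q^2}$. Since $q \equiv 3 \pmod 4$, we have $4 \mid (q+1)$, and $\F_q^* = \langle g^{q+1} \rangle$ is contained in $\langle g^4 \rangle$, so $\F_q^* \subseteq M_{q^2}$. Moreover, because $q+1 \equiv 0 \pmod 4$, the residue $j \pmod 4$ is constant on each coset $g^j \F_q^*$, so $M_{q^2}$ is a disjoint union of cosets of $\F_q^*$. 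Among the $q+1$ coset representatives $g^i$ with $0 \leq i \leq q$, exactly $(q+1)/2$ satisfy $i \equiv 0, 1 \pmod 4$, so $P^*_{q^2}$ is a Peisert-type graph with $m = (q+1)/2$.

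Next, I would invoke Theorem~\ref{subspacethm} to conclude $\omega(P^*_{q^2}) = q$ and that every maximum clique in $P^*_{q^2}$ passing through $0$ is an $\F_p$-subspace of $\F_{q^2}$. The remaining task is to sharpen ``$\F_p$-subspace'' to ``the subfield $\F_q$''. This is exactly the content of Theorem~\ref{subfieldthm}, provided one can exhibit a nontrivial multiplicative character $\chi$ of $\F_{q^2}$ whose image on $M_{q^2}$ is $\varepsilon$-lower bounded. A natural candidate is a quartic character $\chi_4$ (which exists since $4 \mid q^2-1$), for which $\chi_4(M_{q^2}) = \{1, i\}$; the mean of this two-element set has modulus $1/\sqrt{2}$, suggesting $\varepsilon = 1/\sqrt{2}$ is achievable. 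Plugging into the hypothesis $p > 4.1n^2/\varepsilon^2$ produces the threshold $p > 8.2n^2$, which is precisely the hypothesis of Theorem~\ref{conjP*forlargep}, and the conjecture follows for all $p$ above this bound.

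The hard part, and the genuine obstacle to a full proof, is the small-prime regime $p \leq 8.2n^2$. The character sum estimates and direction-counting results that power Theorem~\ref{subfieldthm} degrade precisely when $p$ is small relative to the degree $n = \log_p q$. Settling Mullin's conjecture in full generality would therefore require either a finer analysis tailored to the specific structure of the Peisert connection set (perhaps exploiting the fact that $M_{q^2}$ is \emph{not} multiplicatively closed, unlike the Paley case), an argument ruling out $\F_p$-subspace cliques that are not subfields by a more rigid combinatorial or geometric obstruction, or, failing that, a computer-assisted verification covering the finitely many pairs $(p,n)$ below the threshold for each given $n$.
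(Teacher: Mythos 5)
Your proposal matches the paper's own treatment essentially verbatim: the paper likewise only establishes this conjecture for $p>8.2n^2$ (Theorem~\ref{conjP*forlargep}), by recognizing $P^*_{q^2}$ as a Peisert-type graph (Lemma~\ref{Ptlem}), taking the quartic character with $\chi(g)=i$ so that $\chi(M_{q^2})=\{1,i\}$ is $\tfrac{1}{\sqrt{2}}$-lower bounded (Example~\ref{P*ex}), and invoking Theorem~\ref{subfieldthm}. Your identification of the small-prime regime $p\leq 8.2n^2$ as the genuine remaining obstacle is accurate --- the full conjecture is left open in the paper as well.
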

She remarked that if $q$ is a prime, then Conjecture \ref{conjP*} can be proved using a similar technique as Theorem \ref{Szi99}; see \cite{NM}*{Corollary 3.4}. She also remarked that the proof of Theorem \ref{Szi99} fails to extend to Peisert graphs straightforwardly, since the connection set of a Peisert graph is not closed under multiplication, while for Paley graphs and generalized Paley graphs it is closed; see also Remark~\ref{differenceinproof}. We show that Conjecture~\ref{conjP*} holds for sufficiently large $q$; this is Theorem~\ref{conjP*forlargep} and its proof is given in Section~\ref{sect:main-proofs}.

In \cite{Yip4}, the second author studied the connection between maximal cliques and maximum cliques in Peisert graphs of order $q^4$, and conjectured the following based on numerical evidence. 

\begin{conj}[\cite{Yip3}*{Conjecture 4.4}]\label{conjmaxc*}
If $q$ is a power of a prime $p \equiv 3 \pmod 4$  and $q>3$, then $\F_{q}$ is a maximal clique in the Peisert graph of order $q^4$.
\end{conj}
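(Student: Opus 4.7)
The plan is to argue by contradiction: suppose some $\alpha \in \F_{q^4}\setminus \F_q$ satisfies $\alpha - a \in M_{q^4}$ for every $a \in \F_q$. Using that $\F_q^* \subset M_{q^4}$ (since $p \equiv 3 \pmod 4$ forces every element of $\F_q^*$ to be a fourth power in $\F_{q^4}^*$) together with the multiplicative invariance of $M_{q^4}$ under $\F_q^*$, this assumption is equivalent to $V^* \subset M_{q^4}$, where $V := \F_q + \F_q\alpha$ is a $2$-dimensional $\F_q$-subspace of $\F_{q^4}$. I would then split into two cases according to whether $V$ equals the intermediate subfield $\F_{q^2}$.

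In Case A, when $V = \F_{q^2}$ (so $\alpha \in \F_{q^2}\setminus \F_q$), a short calculation with the generator $g^{q^2+1}$ of $\F_{q^2}^*$ combined with $q^2+1 \equiv 2 \pmod 4$ reveals that $M_{q^4} \cap \F_{q^2}^* = (\F_{q^2}^*)^2$. Hence the subgraph of the Peisert graph $P^*_{q^4}$ induced on $\F_{q^2}$ coincides with the Paley graph $P_{q^2}$, and the well-known equality $\omega(P_{q^2}) = q$ rules out a clique of size $q+1$ inside $\F_{q^2}$.

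In Case B, $\alpha \in \F_{q^4}\setminus \F_{q^2}$: let $\chi$ be the order-$4$ multiplicative character of $\F_{q^4}^*$ for which $M_{q^4} = \chi^{-1}(\{1,i\})$. The clique condition forces $\chi(\alpha+a) \in \{1,i\}$ for all $a \in \F_q$, so $S := \sum_{a \in \F_q}\chi(\alpha+a) = A_0 + iA_1$ with $A_0 + A_1 = q$, and hence $|S| \geq q/\sqrt{2}$. Since $\chi$ factors through the norm $N_{\F_{q^4}/\F_{q^2}}$, one has $\chi(\alpha+a) = \chi_2(h(a))$ for $h(X) := (X+\alpha)(X+\alpha^{q^2}) \in \F_{q^2}[X]$. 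Pushing the squared character $\chi_2^2 = \eta_2$ further through $N_{\F_{q^2}/\F_q}$ gives $\eta_2(h(a)) = \eta'(H(a))$, where $H(X) := N_{\F_{q^4}/\F_q}(X+\alpha) \in \F_q[X]$ is a square-free polynomial of degree $4$ (its four roots being the four distinct Galois conjugates of $-\alpha$). Weil's theorem yields $|\sum_a \eta'(H(a))| \leq 3\sqrt q$. An analogous bound $|S| \leq 3\sqrt q$ follows by applying Weil to $\chi$ itself, either through the factorization $\chi = \chi' \circ N_{\F_{q^4}/\F_q}$ when $q \equiv 1 \pmod 4$, or via a Weil-type estimate for the character sum $\chi_2 \circ h$ attached to the degree-$2$ rational map $a \mapsto [h(a)] \in \F_{q^2}^*/\F_q^* \cong \mathbb{P}^1(\F_q)$ when $q \equiv 3 \pmod 4$. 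Combined with $|S| \geq q/\sqrt{2}$, this forces $q \leq 18$, which is a contradiction whenever $q > 18$.

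The main obstacle is the character-sum bound for $S$ in the sub-case $q \equiv 3 \pmod 4$, since there $\chi$ does not factor through the norm to $\F_q$; I would handle this by exploiting that $\chi_2$ is trivial on $\F_q^*$ (which uses $4 \mid q+1$), letting $\chi_2 \circ h$ descend to a character sum on $\mathbb{P}^1(\F_q)$ to which Weil's theorem can still be applied. The small exceptional values $q \in \{7, 9, 11\}$ (the only $q$ in the range $3 < q \leq 18$ with $p \equiv 3 \pmod 4$) are not resolved by this Weil-based argument and would need to be dispatched by a direct verification or by a finer structural analysis tailored to these small prime powers.
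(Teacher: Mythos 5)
Your overall route closely parallels the paper's: reduce non-maximality of $\F_q$ to the existence of a $2$-dimensional $\F_q$-subspace clique $V=\F_q\oplus\alpha\F_q$ (your direct derivation of this from $\F_q^*\subseteq(\F_{q^4}^*)^4$ and the $\F_q^*$-invariance of $M_{q^4}$ is correct, and it recovers by hand the content of Theorem~1.7 of \cite{Yip3}, which the paper cites instead), rule out $V=\F_{q^2}$ using $q^2+1\equiv 2\pmod 4$, and then trap a character sum between a lower bound coming from $\chi(V^*)\subseteq\{1,i\}$ and a Weil-type upper bound. The execution differs in one place: you sum over the single line $\alpha+\F_q$ (lower bound $q/\sqrt 2$, target upper bound $3\sqrt q$, hence $q\le 18$), whereas the paper sums over all of $V$ and applies Reis's extension of Katz's theorem over subspaces (Corollary~\ref{charsumcor}) to get $(q^2-1)/\sqrt 2<4q^{3/2}$, hence $q<33$, finishing with a computer check of the remaining cases.

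The genuine gap is the upper bound on $S=\sum_{a\in\F_q}\chi(\alpha+a)$ when $q\equiv 3\pmod 4$, which you yourself flag as the main obstacle. In that case $\chi$ does not factor through $N_{\F_{q^4}/\F_q}$, and your proposed fix --- letting $\chi_2\circ h$ ``descend to a character sum on $\mathbb{P}^1(\F_q)$ to which Weil's theorem can still be applied'' --- is not an application of Weil's theorem in any standard form: Weil bounds $\sum_{x\in\F_q}\chi'(f(x))$ for a multiplicative character $\chi'$ of $\F_q^*$, not a sum of a character of the quotient group $\F_{q^2}^*/\F_q^*$ evaluated along a quadratic map into that quotient. (Your auxiliary computation with $\eta_2=\chi_2^2$ only controls $\chi^2$, not $\chi$.) A correct statement does exist, but it is precisely Katz's theorem (Theorem~\ref{Katz}): for $\theta$ of degree $n$ over $\F_q$ and any nontrivial multiplicative character $\chi$ of $\F_{q^n}$ one has $\left|\sum_{a\in\F_q}\chi(\theta+a)\right|\le (n-1)\sqrt q$, which with $n=4$ gives exactly the bound $3\sqrt q$ you want, uniformly in $q\bmod 4$; citing it closes the gap and in fact yields a smaller exceptional range than the paper's. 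Finally, the residual small cases $q\in\{7,9,11\}$ are not optional: they must actually be verified (the paper disposes of all $q<33$ by citing the SageMath computations of \cite{Yip3}), and your proposal leaves them open.
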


Note that the Peisert graph of order $q^4$ is the edge-disjoint union of two copies of the quadruple Paley graph $GP(q^4,4)$. Therefore, Conjecture \ref{conjmaxc*} strengthens \cite{Yip3}*{Theorem 1.6}, which states that $\F_{q}$ is a maximal clique in $GP(q^4,4)$. We show that the conjecture above holds; see Theorem~\ref{conjmaxc}.

Motivated by the similarity shared among Paley graphs, generalized Paley graphs, and Peisert graphs, Mullin introduced generalized Peisert graphs; see \cite{NM}*{Section 5.3}. In order to ensure that $GP^*(q,d)$ is an undirected graph, we also need a further hypothesis $q \equiv 1 \pmod {2d}$; in Mullin's thesis, this last congruence condition is $q \equiv 1 \pmod d$, which appears to be a typo.

\begin{defn}
Let $d$ be a positive even integer, and $q$ a prime power such that $q \equiv 1 \pmod {2d}$. The {\em $d$-th power Peisert graph of order $q$}, denoted by $GP^*(q,d)$, is the Cayley graph $\operatorname{Cay}(\F_q^+, M_{q,d})$, where
$$
M_{q,d}=\bigg\{g^{dk+j}: 0\leq j \leq \frac{d}{2}-1, k \in \Z\bigg\},
$$
and $g$ is a primitive root of $\F_q$.
\end{defn}

\begin{rem}\label{peisert-paley-relation}
Note that $GP^*(q,2)$ is precisely the Paley graph $P_q$, and $GP^*(q,4)$ is the Peisert graph $P^*_q$ if $q$ is an even power of a prime $p \equiv 3 \pmod 4$. Moreoever, the generalized Peisert graph $GP^*(q,d)$ is the edge-disjoint union of $\frac{d}{2}$ copies of the generalized Paley graph $GP(q,d)$, which again suggests that it is much harder to characterize the maximum cliques in $GP^*(q,d)$ than doing so for $GP(q,d)$. Inspired by Conjecture~\ref{conjP*}, one can propose a similar conjecture on maximum cliques in a generalized Peisert graph; we prove such a result in Theorem~\ref{GP*thm}. 
\end{rem}

\subsection{Peisert-type graphs}\label{subsect:peisert-type}

One motivation of this paper is to provide a new and short proof for Theorem \ref{Szi99}, and explore the structure of maximum cliques of those Cayley graphs which are similar to Paley graphs and generalized Paley graphs. In particular, we consider Cayley graphs defined on $\F_{q^2}^+$ with connection set being a union of cosets of $\F_q^*$ in $\F_{q^2}^*$ as in equation \eqref{coset}. 

In our Definition~\ref{defn:peisert-type} of a Peisert-type graph $X=\operatorname{Cay}(\F_{q^2}^{+},S)$ we assume that $\F_q\subset S$; in particular, these graphs enjoy the property that $\omega(X)\geq q$. As we will see in the proof of Theorem~\ref{subspacethm}, the condition on the number of cosets $m\leq \frac{q+1}{2}$ is needed. Another reason for assuming $m\leq \frac{q+1}{2}$ is that we want $|S|\leq\frac{q^2-1}{2}$; see  Example~\ref{P*counter-example}.

We remark that our definition of Peisert-type graphs shares some similarities with the definition of Peisert graphs. Recall that a Peisert graph is simply an edge-disjoint union of two copies of the quadruple Paley graph with the same order. Note that $\operatorname{Cay}(\F_{q^2}^+, \F_q^*)=GP(q^2,q+1)$ is a $(q+1)$-Paley graph. Therefore, a Peisert-type graph is simply an edge-disjoint union of copies of $(q+1)$-Paley graph.  Notice also that we do not require the connection set $S$ to be closed under multiplication in Definition~\ref{defn:peisert-type}.

The following lemma allows us to unify the notions of generalized Paley graphs $GP(q^2,d)$ and generalized Peisert graphs $GP^*(q^2,d)$ under the assumption that $d\mid (q+1)$. Note that when $d$ is even, $GP(q,d)$ already embeds inside $GP^{\ast}(q,d)$; however, for odd $d$, there is no such obvious embedding as $GP^{\ast}(q,d)$ is defined only for even values of $d$. The lemma shows that the Peisert-type graph is general enough to encompass both cases. 

\begin{lem} \label{Ptlem}
The following families of Cayley graphs are Peisert-type graphs:
\begin{itemize}
    \item Paley graphs of square order;
    \item Peisert graph with order $q^2$, where $q \equiv 3 \pmod 4$;
    \item Generalized Paley graphs $GP(q^2,d)$, where $d \mid (q+1)$ and $d>1$;
    \item Generalized Peisert graphs $GP^*(q^2,d)$, where $d \mid (q+1)$ and $d$ is even.
\end{itemize}
\end{lem}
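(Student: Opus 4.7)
The plan is to verify, for each of the four families, the three defining requirements of Definition~\ref{defn:peisert-type}: the connection set $S$ is a subset of $\F_{q^2}^*$, contains $\F_q^*$, and decomposes as a union of at most $(q+1)/2$ cosets of $\F_q^*$ in $\F_{q^2}^*$. The unifying framework is the cyclic structure of $\F_{q^2}^*$: I fix a generator $g$, and note that $\F_q^*$ is the unique subgroup of order $q-1$, namely $\langle g^{q+1}\rangle$.

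For the Paley graph of order $q^2$, the connection set is $S=(\F_{q^2}^*)^2$. Since $q$ is odd, $q+1$ is even, so $g^{q+1}$ lies in $(\F_{q^2}^*)^2$; hence $\F_q^*\subset(\F_{q^2}^*)^2$. Being a subgroup containing $\F_q^*$, $(\F_{q^2}^*)^2$ is automatically a union of cosets, and the index computation $[(\F_{q^2}^*)^2:\F_q^*]=(q^2-1)/(2(q-1))=(q+1)/2$ saturates the bound. The case $GP(q^2,d)$ with $d\mid(q+1)$ and $d>1$ is parallel: the hypothesis $d\mid(q+1)$ forces $\F_q^*=\langle g^{q+1}\rangle\subset\langle g^d\rangle=(\F_{q^2}^*)^d$, and the number of cosets of $\F_q^*$ inside the subgroup $(\F_{q^2}^*)^d$ equals $(q+1)/d\leq(q+1)/2$.

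For the Peisert graph of order $q^2$ with $q\equiv 3\pmod 4$, the connection set is $M_{q^2}=\{g^j:j\equiv 0,1\pmod 4\}$. Since $q\equiv 3\pmod 4$ forces $4\mid (q+1)$, every exponent appearing in $\F_q^*=\langle g^{q+1}\rangle$ is a multiple of $4$, so $\F_q^*\subset M_{q^2}$. Moreover, multiplying any $g^j\in M_{q^2}$ by an element of $\F_q^*$ shifts $j$ by a multiple of $q+1$, hence by a multiple of $4$, preserving the residue $j \bmod 4$; thus $M_{q^2}$ is a union of $\F_q^*$-cosets, and $|M_{q^2}|/|\F_q^*|=((q^2-1)/2)/(q-1)=(q+1)/2$. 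The case $GP^*(q^2,d)$ with $d\mid(q+1)$ and $d$ even runs identically: the condition $d\mid(q+1)$ ensures both that $\F_q^*$ lies in the $j=0$ piece and that multiplication by $\F_q^*$ preserves the residue $j\bmod d$, so the $d/2$ pieces $\{g^{dk+j}:k\in\Z\}$ each break into $(q+1)/d$ cosets of $\F_q^*$, for a total of $(q+1)/2$.

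The main obstacle is bookkeeping: one must apply $q\equiv 3\pmod 4$ precisely to get $4\mid(q+1)$ in the Peisert case, and the hypothesis $d\mid(q+1)$ in the generalized settings plays the dual role of placing $\F_q^*$ inside $S$ and of guaranteeing that $S$ is invariant under $\F_q^*$-multiplication. Once these divisibilities are lined up, each case reduces to the same index computation yielding $(q+1)/2$ (or fewer) cosets.
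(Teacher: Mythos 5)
Your proof is correct and follows essentially the same route as the paper: identify $\F_q^*$ as the index-$(q+1)$ subgroup $\langle g^{q+1}\rangle$ of the cyclic group $\F_{q^2}^*$, use the divisibility $d\mid(q+1)$ to place $\F_q^*$ inside the connection set and to see that the connection set is $\F_q^*$-invariant, and then count cosets by an index computation. The only cosmetic difference is that the paper first reduces the Paley and Peisert cases to the generalized ones, whereas you treat all four directly; the substance is identical.
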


The proof of the preceding lemma will be presented in Section~\ref{sect:main-proofs}. Both (generalized) Paley graphs and (generalized) Peisert graphs have many nice properties, and they have many applications in coding theory and design theory; see for example \cites{JAlex15,JL,KR,WQWX}. We also expect that Peisert-type graphs will have a new set of applications.

\subsection{Main results and their implications}\label{subsect:main-results}
Our goal is to characterize maximum cliques in a Peisert-type graph $X=\operatorname{Cay}(\F_{q^2}^+,S)$. Our first main result, Theorem~\ref{subspacethm}, asserts that the clique number of $X$ is $q$, and every maximum clique in $X$ has vector space structure; see \cite{Yip3} for a related discussion on the vector space structure of maximal cliques in Cayley graphs. Later, we will show that such subspace must be in fact the subfield $\F_q$ under additional assumptions. 

When the order of the graph is $p^2$, Theorem \ref{subspacethm} immediately implies the following corollary.

\begin{cor}
Let $X$ be a Peisert-type graph of order $p^2$. Then the only maximum clique containing $0,1$ is the subfield $\F_{p}$. 
\end{cor}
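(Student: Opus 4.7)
The plan is to derive this corollary as an essentially immediate consequence of Theorem~\ref{subspacethm}, specialized to the case $q=p$. The graph $X$ has order $p^2$, so applying Theorem~\ref{subspacethm} (with ``$q$'' taken to be $p$) yields $\omega(X)=p$, and any maximum clique in $X$ containing $0$ is an $\F_p$-subspace of $\F_{p^2}$.

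Next, I would use a dimension count to identify the subspace. If $C$ is a maximum clique in $X$ containing $0$, then $|C|=\omega(X)=p$, and $C$ is an $\F_p$-subspace of $\F_{p^2}$. Since $\F_{p^2}$ is a two-dimensional vector space over $\F_p$, the only $\F_p$-subspaces of size $p$ are the one-dimensional ones. Hence $C$ is a one-dimensional $\F_p$-subspace of $\F_{p^2}$.

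Finally, I would pin down which one-dimensional subspace $C$ is. A one-dimensional $\F_p$-subspace is determined by any nonzero element it contains: if $1\in C$, then $C=\F_p\cdot 1=\F_p$. This forces $C$ to coincide with the prime subfield, completing the proof.

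The entire argument is essentially a direct specialization of Theorem~\ref{subspacethm} combined with the observation that one-dimensional subspaces of $\F_{p^2}$ over $\F_p$ are precisely the multiplicative scalings of a single nonzero vector; there is no genuine obstacle, which is why this appears as a corollary rather than a theorem.
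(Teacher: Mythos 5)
Your proposal is correct and matches the paper's intent exactly: the paper states this corollary as an immediate consequence of Theorem~\ref{subspacethm}, and your specialization to $q=p$ followed by the observation that a $p$-element $\F_p$-subspace of $\F_{p^2}$ containing $1$ must be $\F_p\cdot 1=\F_p$ is precisely the intended one-line deduction.
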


It is natural to ask whether the conclusion in Theorem~\ref{subspacethm} can be strengthened to the assertion that any maximum clique is an $\F_q$-affine space. In Example~\ref{ex:counter-example} below, we will see a counterexample for this stronger conclusion.

In the same spirit as Theorem \ref{embed}, we prove an embeddability result for cliques in Peisert-type graphs.

\begin{thm} \label{stabilitythm}
Let $X=\operatorname{Cay}(\F_{q^2}^+, S)$ be a Peisert-type graph, where $q=p^n$, and $m=|S|/(q-1)<\frac{q+1}{2}$. If $C$ is a clique in $X$, such that $0 \in C$ and $|C|>q-\big(1-m/(q+1)\big)\sqrt{q}$, then $C$ is contained in a $\F_p$-subspace of $\F_{q^2}$ with dimension $n$. 
\end{thm}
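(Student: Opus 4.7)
The plan is to translate the clique hypothesis into a statement about \emph{directions determined by a point set} in the affine plane $\operatorname{AG}(2,q)$, and then apply the R\'edei--Ball--Sz\H{o}nyi-type results collected in Section~\ref{sect:tools}. This follows the broad strategy of Sziklai's proof of Theorem~\ref{embed}, with the single direction class used for $(\F_{q^2}^*)^d$ replaced by the $m$ direction classes making up $S$.

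First I would fix an $\F_q$-basis of $\F_{q^2}$ to identify $\F_{q^2}$ with $\operatorname{AG}(2,q)$, so that every nonzero $s \in \F_{q^2}$ determines a direction $[s] \in \operatorname{PG}(1,q)$. Since $S = c_1\F_q^* \cup \cdots \cup c_m\F_q^*$, the set of ``allowed'' directions is exactly $D = \{[c_1], \dots, [c_m]\}$, a set of size $m$ in $\operatorname{PG}(1,q)$. Because $C$ is a clique containing $0$, every difference of two distinct points of $C$ lies in $S$, so the set of directions determined by $C$ is contained in $D$; in particular, $C$ determines at most $m$ directions. Writing $k = q - |C|$, the hypothesis $|C| > q - (1 - m/(q+1))\sqrt{q}$ becomes $k < (1 - m/(q+1))\sqrt{q}$, which is precisely the numerical input required by Sz\H{o}nyi's stability/extension theorem: the set $C$ can be enlarged to a set $U \subseteq \operatorname{AG}(2,q)$ of cardinality $q$ whose set of determined directions coincides with that of $C$, so $U$ itself determines at most $m$ directions.

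The next step is to apply Ball's structure theorem (Theorem~\ref{Ball03}) to $U$. Because $m < (q+1)/2 < (q+3)/2$, the ``many directions'' case is ruled out, and only two possibilities remain. Either $U$ determines a single direction, in which case $U$ is a line through $0$ (so $U = c\F_q$ for some $c \in \F_{q^2}^*$ and $C \subseteq c\F_q$, an $\F_p$-subspace of dimension $n$); or $U$ is $\F_{p^e}$-linear for some divisor $e$ of $n$. In the latter case, since $0 \in U$ and $|U| = p^n = q$, the set $U$ is an $\F_{p^e}$-subspace of $\F_{q^2}$ of $\F_{p^e}$-dimension $n/e$, which is an $\F_p$-subspace of $\F_{q^2}$ of dimension $n$. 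Either way, $C \subseteq U$ lies inside an $\F_p$-subspace of dimension $n$, as claimed.

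The main technical point is verifying that the threshold $|C| > q - (1 - m/(q+1))\sqrt{q}$ matches exactly the deficit hypothesis of Sz\H{o}nyi's extension theorem when the number of prescribed directions is $m$ out of $q+1$; this is where the factor $1 - m/(q+1)$ surfaces naturally and where any slack would weaken the stated threshold. A secondary, easier bookkeeping issue is to guarantee that $0 \in C$ survives the extension step, so that the $\F_{p^e}$-linearity of $U$ upgrades to the statement that $U$ is an honest $\F_{p^e}$-subspace (not merely an affine translate of one) containing $C$.
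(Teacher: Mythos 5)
Your proposal is correct and follows essentially the same route as the paper: embed $C$ in $AG(2,q)$, observe that the directions determined lie among the $m$ classes of $S$, invoke Sz\H{o}nyi's extension theorem with the deficit bound $k<(1-m/(q+1))\sqrt{q}$, and conclude via Ball's theorem. The only cosmetic difference is that the paper first checks the extended set is again a clique and then cites Theorem~\ref{subspacethm}, whereas you apply Theorem~\ref{Ball03} to the extended set directly; the two are the same argument.
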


We want to find minimal assumptions on Peisert-type graphs $X=\operatorname{Cay}(\F_{q^2}^+, S)$ for which the only maximum clique containing $0,1$ is the subfield $\F_q$. Such a clique is already an $\F_p$-subspace by Theorem~\ref{subspacethm}, and to show that it is exactly the subfield $\F_q$, we will use character sum estimates in Section \ref{charsumsection}. The following definition is helpful for our discussion.

\begin{defn}\label{defn:epsilon-bounded}
Let $\varepsilon>0$. A set $M \subset \C$ is said to be $\varepsilon$-lower bounded if for every integer $k \in \N$, and for every choice of $x_1,x_2,\ldots, x_k \in M$, we have
\begin{align*}
\bigg|\sum_{j=1}^{k} x_j\bigg| \geq \varepsilon k.
\end{align*}
\end{defn}

\begin{ex}\label{P*ex} \rm
If $a,b$ are non-negative integers, then 
\begin{align*}
|a+bi|=\sqrt{a^2+b^2} \geq \sqrt{(a+b)^2/2}=\frac{a+b}{\sqrt{2}}.
\end{align*}
Therefore, the set $\{1,i\}$ is $\frac{1}{\sqrt{2}}$-lower bounded. 
\end{ex}

Our Theorem~\ref{subfieldthm} has the hypothesis that $\{\chi(x): x \in S\}$ is $\varepsilon$-lower bounded for some $\varepsilon>0$. The following example explains the necessity of the condition $m\leq (q+1)/2$, or equivalently, $|S|\leq (q^2-1)/2$ in Definition~\ref{defn:peisert-type}.

\begin{ex}\label{P*counter-example} \rm
Suppose that $S\subset \F_{q^2}^{\ast}$ with $|S|>(q^2-1)/2$. If $\chi$ is an odd multiplicative character of $\F_{q^2}$, meaning that $\chi(-1)=-1$, then we claim that $S$ is not $\varepsilon$-lower bounded for \emph{any} $\varepsilon>0$. Indeed, by the pigeonhole principle, $\{b, -b\}\in S$ for some $b\in \F_{q^2}^{\ast}$, and $\chi(-b)+\chi(b)=0$, showing that the condition in Definition~\ref{defn:epsilon-bounded} is not satisfied for $k=2$. 

\end{ex}

In Lemma~\ref{Ptlem}, we saw that certain generalized Paley graphs and Peisert graphs are Peisert-type graphs. Next we present two quick applications of Theorem \ref{subfieldthm}.

It is straightforward to apply Theorem \ref{subfieldthm} to generalized Paley graph $GP(q^2,d)$, where $d \mid (q+1)$. Let $\chi$ to be a multiplicative character with order $d$; then the set $\{\chi(x^d): x \in \F_{q^2}^*\}=\{1\}$ is trivially $1$-lower bounded. This immediately implies a slightly weaker version of Theorem \ref{Szi99}, that is, we obtain a simple proof that Theorem \ref{Szi99} holds if $p$ is sufficiently large. 

\begin{rem} \label{differenceinproof}
The original proofs of Theorem~\ref{Blo84} by Blokhuis and Theorem~\ref{Szi99} by Sziklai relied on the algebraic properties of the connection sets being squares and $d$-th powers, which are in fact subgroups. Although the implication of our main result to Paley graphs is slightly weaker than their results, it seems difficult to extend their method to a general Peisert-type graph. 
\end{rem}

Similarly, we can apply Theorem \ref{subfieldthm} to Peisert graph with order $q^2$, where $q \equiv 3 \pmod 4$, and deduce that Conjecture \ref{conjP*} holds as long as $p$ is sufficiently large.

\begin{proof} [Proof of Theorem~\ref{conjP*forlargep}]
Let $g$ be a primitive root of $\F_{q^2}$ and let $\chi$ be a multiplicative character such that $\chi(g)=i$. Then the set $\{\chi(g^{4k}): k \in \N\} \cup \{\chi(g^{4k+1}): k \in \N\}=\{1,i\}$ is $\frac{1}{\sqrt{2}}$-lower bounded, as shown in Example \ref{P*ex}. The required claim follows from Theorem \ref{subfieldthm}.
\end{proof}

In Section \ref{sect:apply}, we will apply Theorem \ref{subfieldthm} to study maximum cliques in a generalized Peisert graph $GP^*(q^2,d)$, where $d \mid (q+1)$. Note that Theorem \ref{GP*thm} is a generalization of Theorem~\ref{conjP*forlargep}. It also strengthens Sziklai's result (Theorem \ref{Szi99}), because $GP(q^2,d)$ is a subgraph of $GP^{*}(q^2, d)$; see Remark~\ref{peisert-paley-relation} for more details. 

\begin{thm}\label{GP*thm}
Let $n \geq 2$ be an integer, and $d \geq 4$ an even integer. Let $X=GP^*(q^2,d)$ be a generalized Peisert graph, where $q=p^n$,  $p>4.1n^2d^4/\pi^2(d-1)^2$, and $d \mid (q+1)$. Then in $X$, the only maximum clique containing $0,1$ is the subfield $\F_q$.
\end{thm}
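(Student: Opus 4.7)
The plan is to deduce Theorem~\ref{GP*thm} directly from Theorem~\ref{subfieldthm}. By Lemma~\ref{Ptlem}, $X = GP^*(q^2, d)$ is a Peisert-type graph, since $d$ is even and $d \mid (q+1)$; thus Theorem~\ref{subfieldthm} applies as soon as I exhibit a nontrivial multiplicative character $\chi$ of $\F_{q^2}$ for which $\{\chi(x) : x \in M_{q^2,d}\}$ is $\varepsilon$-lower bounded, with $\varepsilon$ large enough that the assumed lower bound on $p$ forces $p > 4.1\,n^2/\varepsilon^2$.

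The natural candidate is a character of order $d$. Fixing a primitive root $g$ of $\F_{q^2}^*$, I set $\chi(g) = \zeta_d := e^{2\pi i/d}$, which is well-defined since $d \mid q^2 - 1$ (because $d \mid q+1$). Unpacking the definition of $M_{q^2,d}$, for any $x = g^{dk+j}$ with $0 \leq j \leq d/2 - 1$ one has $\chi(x) = \zeta_d^j$, so
\[
\{\chi(x) : x \in M_{q^2,d}\} = \{1, \zeta_d, \zeta_d^2, \ldots, \zeta_d^{d/2-1}\},
\]
i.e., the half of the $d$-th roots of unity lying in the closed upper half-plane.

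The crux is to show this set is $\sin(\pi/d)$-lower bounded. The idea is to rotate by the bisector of the arc it spans. Put $\bar\theta = \pi/2 - \pi/d$. Every element $e^{i\theta}$ of the set has $\theta \in [0, \pi - 2\pi/d]$, hence $\theta - \bar\theta \in [-(\pi/2 - \pi/d),\, \pi/2 - \pi/d]$ and $\cos(\theta - \bar\theta) \geq \sin(\pi/d)$. Given any $x_1, \ldots, x_k$ in the set, multiplying by $e^{-i\bar\theta}$ and taking real parts yields
\[
\Bigl|\sum_{s=1}^{k} x_s\Bigr| \geq \operatorname{Re}\Bigl(e^{-i\bar\theta}\sum_{s=1}^{k} x_s\Bigr) = \sum_{s=1}^{k}\cos(\theta_s - \bar\theta) \geq k \sin(\pi/d),
\]
as required.

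To reconcile the quantitative constants, the elementary estimate $\sin x \geq x - x^3/6$ evaluated at $x = \pi/d$ gives $\sin(\pi/d) \geq \pi(d-1)/d^2$ for all $d \geq 4$ (the reduction boils down to $d \geq \pi^2/6$). Consequently $4.1\,n^2/\sin^2(\pi/d) \leq 4.1\,n^2 d^4 / (\pi^2(d-1)^2) < p$, so Theorem~\ref{subfieldthm} applies with $\varepsilon = \sin(\pi/d)$ and yields the conclusion. The only genuine obstacle is the $\varepsilon$-lower bounded property, which amounts to locating the point of the convex hull of the half-set of roots of unity closest to the origin; the bisector-rotation trick handles it in a single line, and everything else is bookkeeping.
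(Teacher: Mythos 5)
Your proof is correct, and at the top level it follows the same route as the paper: realize $GP^*(q^2,d)$ as a Peisert-type graph via Lemma~\ref{Ptlem}, take the order-$d$ character with $\chi(g)=e^{2\pi i/d}$, observe that the character values on the connection set are exactly the half-set $\{1,\zeta_d,\dots,\zeta_d^{d/2-1}\}$, prove this set is $\varepsilon$-lower bounded, and feed the resulting $\varepsilon$ into Theorem~\ref{subfieldthm}. The one place you genuinely diverge is the proof of the lower-boundedness (the paper's Lemma~\ref{GP*lem}): the paper splits into cases according to whether $a_0\le b\cos\frac{2\pi}{d}$ and bounds real and imaginary parts separately, landing on the constant $\sqrt{(1-\cos\frac{2\pi}{d})/2}$, which is of course equal to your $\sin(\pi/d)$. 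Your bisector-rotation argument --- project $\sum x_s$ onto the direction $e^{i(\pi/2-\pi/d)}$ and use $\cos(\theta_s-\bar\theta)\ge\sin(\pi/d)$ --- reaches the same constant in one line and makes transparent why it is optimal (it is the distance from the origin to the convex hull of the arc). Your numerical bookkeeping also checks out: $\sin(\pi/d)\ge \pi/d-\pi^3/(6d^3)\ge \pi(d-1)/d^2$ reduces to $d\ge\pi^2/6$, which holds for $d\ge 4$, so $p>4.1n^2d^4/\pi^2(d-1)^2$ indeed implies $p>4.1n^2/\sin^2(\pi/d)$. In short: same skeleton, but a cleaner and more conceptual proof of the key lemma.
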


The following example illustrates the necessity of the assumption that $p$ is sufficiently large stated in Theorem \ref{subfieldthm}.

\begin{ex}\label{ex:counter-example}\rm
We consider the maximum cliques in a generalized Peisert graph $GP^*(q^2,q+1)$, which was shown to be a conference graph by Mullin \cite{NM}*{Lemma 5.3.6} and also a Peisert-type graph in Lemma \ref{Ptlem}.  Let $g$ be a primitive root of $\F_{q^2}^*$; then $g^{q+1}$ is a primitive root of $\F_{q}^* \subset \F_{q^2}^*$. Therefore, $GP^*(q^2,q+1)=\operatorname{Cay}(\F_{q^2}^+,S)$ is a Peisert-type graph, where
\[
S=g^0 \F_q^* \cup g^1 \F_q^* \cup \cdots \cup g^{(q-1)/2} \F_q^*.
\]
Let $C$ be a maximum clique containing $0$; then $C \setminus \{0\} \subset S$. Let $j$ be the smallest non-negative integer such that $C \cap g^j \F_q^*\neq \emptyset$, and say $ag^j \in C$, where $a \in \F_q^*$. Then $(ag^j)^{-1} C \setminus \{0\} \subset S$; in particular, $(ag^j)^{-1} C$ is a maximum clique containing $0,1$.

Suppose $\F_q$ was the only maximum clique containing $0,1$; then the above argument would show that all maximum cliques containing $0$ are given by $g^j \F_q$, where $0 \leq j \leq (q-1)/2$. This would imply that the number of maximum cliques containing $0$ is exactly $\frac{q+1}{2}$, which violates the computational results listed in \cite{NM}*{Section 5.4}. For example, $GP^*(3^4,10)$ has $9$ maximum cliques containing $0$ which is more than $\frac{q+1}{2}=\frac{9+1}{2}=5$; see \cite{AGLY22}*{Section 5.2} for a detailed study of the maximum cliques in $GP^*(3^4,10)$. Similarly, $GP^*(5^4,26)$ has $19$ maximum cliques containing $0$, which is more than $\frac{q+1}{2}=\frac{25+1}{2}=13$. This shows that Theorem~\ref{subfieldthm} can only hold for sufficiently large $p$.
\end{ex}

\section{Main tools}\label{sect:tools}

In this section, we introduce two classical tools, which are both crucial for our proof.

\subsection{Number of directions determined by a point set}
Let $AG(2,q)$ denote the {\em affine Galois plane} over the finite field $\F_q$. Let $U \subset AG(2,q)$. We use Cartesian coordinates in $AG(2,q)$ so that $U=\{(x_i,y_i):1 \leq i \leq |U|\}$.
The set of {\em directions determined by} $U \subset AG(2, q)$ is 
\[ \mathcal{D}_U=\left\{ \frac{y_j-y_i}{x_j-x_i} \colon 1\leq i <j \leq |U| \right \} \subset \F_q \cup \{\infty\},\]
where $\infty$ is the vertical direction. The theory of directions has been developed by R\'edei \cite{LR73}, Sz\H{o}nyi \cite{Sz}, and many other authors. The main methods in the theory revolve around studying certain lacunary polynomials and symmetric polynomials. In modern language, the theory was built via the application of the polynomial method over finite fields. We remark that it has been used several times to study cliques of Paley graphs and generalized Paley graphs. One highlight is that it can be used to deduce the best-known upper bounds on the clique number of Paley graphs and generalized Paley graphs; see recent papers \cites{DSW,Yip2}.

The standard way to identify $\F_{q^2}$ and $AG(2,q)$ is via an embedding with respect to a basis of $\F_{q^2}$ over $\F_q$ seen as a $2$-dimensional vector space over $\F_q$. We say that $\pi:\F_{q^2}\to AG(2,q)$ is an {\em embedding} if
\[
\pi(au+bv)=(a,b), \forall a,b \in \F_q,
\]
where $\{u,v\}$ forms a basis of $\F_{q^2}$ over $\F_q$. In the definition below, the term \textit{$F$-subspace} means a vector subspace defined over a field $F$.

\begin{defn}[\cite{BBBSS}]
Let $U$ be a subset of $AG(2,q)$ containing the origin, and let $K$ be a subfield of $\F_q$. We say $U$ is $K$-linear if there is an embedding $\pi_0$ of $\F_{q^2}$ into the $AG(2,q)$, such that $\pi_0^{-1}(U)$ forms a $K$-subspace of $\F_{q^2}$.
\end{defn}

The following theorem, due to Ball \cite{Ball03}, characterizes the number of directions determined by the graph of a function. It was built on previous works \cites{LR73,BBS,BBBSS}. We present below the simplified version of Ball's original formulation.

\begin{thm}[\cite{Ball03}]\label{Ball03}
Let $f:\F_q \to \F_q$ be any function such that $f(0)=0$, where $q$ is an odd prime power. Let $N$ be the number of directions determined by the graph of $f$. Then either $N \geq \frac{q+3}{2}$, or there is a subfield $K$ of $\F_q$ such that the graph of $f$ is $K$-linear. Moreover, in the latter case, if $K$ is the
largest subfield over which the graph is $K$-linear and $K \neq \F_q$, then $N \geq q/|K|+1$.
\end{thm}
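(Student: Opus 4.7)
The plan is to invoke the \emph{Rédei polynomial} of the graph of $f$ and exploit the classical theory of fully reducible lacunary polynomials over $\F_q$. Set $U = \{(a, f(a)) : a \in \F_q\}$ and define
\begin{equation*}
R(X, Y) = \prod_{a \in \F_q}\bigl(X + aY - f(a)\bigr) \in \F_q[X, Y].
\end{equation*}
A direct verification shows that a slope $m \in \F_q$ is \emph{not} determined by $U$ precisely when the map $a \mapsto f(a) - am$ is a bijection of $\F_q$, equivalently when $R(X, m) = X^q - X$. Writing $R(X,Y) = X^q + \sum_{j=1}^{q} r_j(Y)\,X^{q-j}$ with $\deg_Y r_j \leq j$, each non-determined slope imposes a strong set of linear constraints on the coefficients $r_j(Y)$, which are themselves controlled-degree polynomials in $Y$.

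The crux of the argument is a structural analysis of the specializations $R(X, m)$ for non-determined $m$. Each such specialization is fully reducible over $\F_q$ and has the form $X^q + (\text{lower-degree terms in } X)$, i.e., a \emph{lacunary polynomial}. The main technical ingredient, which forms the heart of Ball's paper, is a quantitative theorem on fully reducible lacunary polynomials: if the number $N$ of determined directions satisfies $N \leq (q+1)/2$, then a delicate $p$-adic analysis forces the lower-degree tail of each such specialization to be a $p^e$-th power in $X$ for some uniform maximal $e \geq 1$. Combined with Newton's identities relating the coefficients $r_j$ to the power sums $\sum_a (f(a) - aY)^k$, this $p^e$-power structure transfers back to $U$ itself, forcing (after translating $U$ to contain the origin) closure under multiplication by $\F_{p^e}$. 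Taking $K = \F_{p^e}$ as the maximal subfield of $\F_q$ with this property yields the $K$-linearity conclusion.

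Once $K$-linearity is established with $K \subsetneq \F_q$, the quantitative bound $N \geq q/|K| + 1$ follows by elementary incidence geometry: the set of affine directions determined by a $K$-subspace $V \subseteq \F_q^2$ of cardinality $q$ is a union of $K$-lines through the origin in the direction space, and a pigeonhole argument over a $K$-basis of $V$ produces at least $q/|K| + 1$ distinct projective slopes. This last step is essentially combinatorial once the algebraic structure is in hand, so I would defer it to a short independent lemma.

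The main obstacle is unquestionably the lacunary polynomial step. Rédei's original theorem gave the $p$-power conclusion only for $K = \F_p$, and the extension to arbitrary subfields of $\F_q$ with the sharp threshold $(q+3)/2$ required the cumulative work of Blokhuis, Brouwer, Storme, and Szőnyi, culminating in Ball's sharpening. I would quote the relevant lacunary polynomial lemma as a black box rather than reprove it from scratch, since the careful bookkeeping between the degree of the lacunary tail, the $p$-adic expansion of $q$, and the multiplicities of linear factors in the full factorization is substantial and not amenable to a short sketch; the planning task reduces to verifying that this black box feeds cleanly into the Rédei polynomial setup described above.
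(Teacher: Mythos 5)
This statement is not actually proved in the paper: Theorem~\ref{Ball03} is imported from Ball's work, with the surrounding text only recording that it builds on R\'edei and on Blokhuis--Ball--Brouwer--Storme--Sz\H{o}nyi. So the honest comparison is between your sketch and the proof in the cited literature. On that score your outline is faithful to the actual argument: the R\'edei polynomial $R(X,Y)=\prod_{a}(X+aY-f(a))$, the observation that a non-determined slope $m$ forces $R(X,m)=X^q-X$ (so each coefficient $r_j(Y)$, having degree at most $j$, vanishes identically once $j<q+1-N$, which produces the lacunary shape of the determined specializations), and the reduction to the classification of fully reducible lacunary polynomials. Quoting that classification as a black box is defensible and is in effect exactly what the paper does by citing Ball.

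The one substantive flaw is your final paragraph. The inequality $N\geq q/|K|+1$ is \emph{not} an elementary pigeonhole consequence of $K$-linearity. After translating so that $0\in U$, the set $U$ is the graph of a $K$-linear map $g:\F_q\to\F_q$, the fibres of the direction map are the kernels $\ker(g-d\cdot\mathrm{id})$, and these pairwise intersect trivially; but when $[\F_q:K]=m\geq 4$ this counting only excludes configurations violating $\dim V_1+\dim V_2\leq m$, so a priori the nonzero fibres could form a partial spread by subspaces of $K$-dimension about $m/2$, yielding only on the order of $|K|^{\lceil m/2\rceil}$ directions --- far short of $q/|K|+1=|K|^{m-1}+1$. (For $m=2,3$ the elementary count does happen to suffice, which may be why the step looks routine.) Ruling out such spread-like configurations is exactly where the maximality of $K$ and the lacunary-polynomial machinery are used again: in Ball's paper the bound appears as $q/s+1\leq N\leq (q-1)/(s-1)$ with $s$ maximal, proved alongside the linearity statement rather than after it. So this inequality must be included in the black box; it cannot be deferred to a short independent incidence-geometry lemma as proposed.
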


If $U \subset AG(2,q)$ and $|U|=q$, we can try to apply Theorem \ref{Ball03} to study $\mathcal{D}_U$. This is because an affine transformation preserves the number of directions; if $U$ does not determine all the directions, then $U$ is a graph of a function after an affine transformation. The following stability result, due to Sz\H{o}nyi \cite{Sz1} and Sziklai \cite{Szi99}, enables us to extend Theorem \ref{Ball03} to any $U \subset AG(2,q)$ with size $q-O(\sqrt{q})$. 

\begin{thm}[\cite{Sz1}*{Theorem 4}, \cite{Szi99}*{Theorem 3.1}] \label{extension}
Let $U \subset AG(2,q)$ with $|U|=q-k$, where $0 \leq k \leq \alpha \sqrt{q}$, where $\alpha \in [1/2,1]$. If $U$ determines less than $(q+1)(1-\alpha)$ directions, then it can be extended to a set $U'$ with $|U'|=q$, which determines the same set of directions as $U$.
\end{thm}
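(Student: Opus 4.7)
The plan is to use the Rédei polynomial machinery, which underlies all known proofs of stability results for direction sets. Associate to $U = \{(a_i, b_i) : 1 \le i \le q-k\}$ the Rédei polynomial
\[
R(X, Y) := \prod_{i=1}^{q-k}\bigl(X - (b_i - Y a_i)\bigr) = \sum_{j=0}^{q-k} r_j(Y)\, X^{q-k-j},
\]
where $r_0 \equiv 1$ and $\deg_Y r_j \le j$, since $r_j$ is, up to sign, the $j$-th elementary symmetric function of the affine forms $b_i - Y a_i$. For each non-determined direction $y \in \F_q$, the $q-k$ values $b_i - y a_i$ are pairwise distinct elements of $\F_q$, so $R(X, y)$ is a separable divisor of $X^q - X$, and the cofactor $F_y(X) := (X^q - X)/R(X, y)$ is a degree-$k$ polynomial whose roots are exactly the $k$ missing elements of $\F_q$.

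Next, I would realize the family $\{F_y\}$ as the specialization of a single bivariate polynomial $F(X, Y) \in \F_q[X, Y]$ with controlled $Y$-degree. Applying Newton's identities to the power sums $\sigma_\ell(y) := \sum_i (b_i - y a_i)^\ell$, which satisfy $\deg_Y \sigma_\ell \le \ell$, produces the elementary symmetric functions of the roots of $F_y$ as polynomials in $y$ of degree at most $s$, yielding $F(X, Y) = X^k + \phi_1(Y) X^{k-1} + \cdots + \phi_k(Y) \in \F_q[X, Y]$ with $\deg_Y \phi_s \le s$ (so that the total degree of $F$ is at most $k$), together with the identity $F(X, y) = F_y(X)$ for every non-determined $y$.

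The heart of the proof, and the main obstacle, is to show that $F(X, Y)$ splits in $\F_q[Y][X]$ as
\[
F(X, Y) = \prod_{j=1}^k \bigl(X - (b'_j - Y a'_j)\bigr)
\]
for some affine points $(a'_j, b'_j) \in \F_q^2$. Viewed as an affine plane curve of total degree at most $k$, the zero locus $F(X,Y)=0$ meets the horizontal line $Y = y$ in exactly $k$ distinct $\F_q$-rational points for each of the $|N^*| > (q+1)\alpha$ non-determined directions, giving a total of at least $k|N^*|$ rational points on the curve. The hypothesis $k \le \alpha\sqrt{q}$ combined with $|N^*| > (q+1)\alpha$ is calibrated so that $k|N^*|$ exceeds the Weil-type bound $q + O(k^2\sqrt{q})$ available for any nontrivial absolutely irreducible factor of $F$, forcing every absolutely irreducible component of $F$ to be a line; a degree-matching argument in $X$ then rules out the degenerate horizontal components $Y = \mathrm{const}$ and pins the remaining components down to the required Rédei form $X - (b'_j - Y a'_j)$. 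I expect this Bezout/Weil-style step to require the most technical care, especially in tracking the precise degree bounds and handling the direction at infinity as a separate case.

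Once the factorization is in hand, define $U' := U \cup \{(a'_j, b'_j) : 1 \le j \le k\}$; then $|U'| = q$ and the Rédei polynomial of $U'$ is $R(X,Y) \cdot F(X,Y)$. For each $y \in N^*$, this product equals $X^q - X$, so the values $\{b_i - y a_i\} \cup \{b'_j - y a'_j\}$ exhaust $\F_q$ without repetition, meaning $y$ remains non-determined by $U'$. Since $U \subseteq U'$ automatically preserves every originally determined direction, the two sets of determined directions coincide, completing the proof.
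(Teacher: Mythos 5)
First, a structural point: the paper does not prove Theorem~\ref{extension} at all --- it is quoted from Sz\H{o}nyi and Sziklai --- so there is no in-paper argument to compare against. Measured against the proofs in those sources, your architecture is the right one: the R\'edei polynomial $R(X,Y)$, the observation that $R(X,y)\mid X^q-X$ for each non-determined $y$, the gluing of the cofactors into a single $F(X,Y)$ of degree $k$ with $\deg_Y\phi_s\le s$, the factorization of $F$ into R\'edei-form linear terms, and the adjunction of the $k$ new points. (A minor caveat on the gluing step: Newton's identities require dividing by integers up to $k$, which fails when $p\le k$; the characteristic-free route is to perform the long division of $X^q-X$ by $R(X,Y)$ directly and read off the degree bounds from that.)

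The genuine gap is in the step you yourself identify as the heart of the proof. The claim that $k\le\alpha\sqrt q$ and $|N^*|>(q+1)\alpha$ are ``calibrated'' so that the point count beats the Weil bound is false in the extremal case $\alpha=1/2$ --- which is precisely the regime in which this paper applies the theorem (in the proof of Theorem~\ref{stabilitythm} one takes $\alpha=1-m/(q+1)-\varepsilon$ with $m$ as large as allowed, so $\alpha$ is barely above $1/2$). Note first that the total count $k|N^*|$ must be distributed among the components: an absolutely irreducible component of degree $d$ receives at most $d$ of the $k$ points on each line $Y=y$, so it is only forced to carry about $d|N^*|-d(k-d)$ rational points. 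For $d=2$ and $\alpha=1/2$ this is roughly $q+1-2\sqrt q$, while a smooth conic has exactly $q+1$ points in $PG(2,q)$ --- no contradiction. Worse, the configuration is actually realizable at the level of the count: if the common point $(1:0:0)$ of the pencil of horizontal lines is an internal point of a conic whose line at infinity is external, then exactly $(q+1)/2$ of the affine lines $Y=y$ are bisecants of that conic, so a conic component really can supply two of the $k$ missing points on each of $(q+1)/2$ non-determined lines. Hence Weil plus B\'ezout cannot exclude degree-two components, and the argument collapses exactly where all the difficulty lies. The actual proofs of Sz\H{o}nyi and Sziklai do not run this count; they exploit the stronger information that $R(X,y)F(X,y)=X^q-X$ for every non-determined $y$ (not merely that $F(X,y)$ splits into distinct linear factors), through the resulting coefficient identities and a lacunary-polynomial argument, to force the factorization $F(X,Y)=\prod_j\bigl(X-(b'_j-Ya'_j)\bigr)$. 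Your final step --- adjoining the $k$ new points and checking that the direction set is unchanged --- is fine once that factorization is granted.
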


\subsection{Character sums over subspaces} \label{charsumsection}
 Recall that a {\em multiplicative character} of $\F_q$ is a group homomorphism from $\F_q^*$ to the multiplicative group of complex numbers with modulus 1. It is customary to define $\chi(0)=0$. We will use $\chi_0$ to denote the trivial multiplicative character of $\F_q$. For a multiplicative character $\chi$, its order $d$ is the smallest positive integer such that $\chi^d=\chi_0$. 

Many problems in number theory and combinatorics rely on non-trivial character sum estimates over certain subsets of interest. We refer to \cite{Chang} for a nice survey paper by Chang, consisting of known results on character sums over a subset of a given finite field, such as the extension of the classical P\'olya-Vinogradov inequalities and Burgess' bounds to a general finite field. For our purpose, we need nontrivial character sum estimates over a subspace of a finite field.

Weil's estimate (see \cite{LN}*{Theorem 5.41}) is a classical tool of bounding complete character sums with a polynomial argument over a finite field. The following theorem, due to Katz \cite{Katz}, essentially generalizes Weil's estimate to character sums over a subspace. Recall an element $\theta \in \F_{q^n}$ is said to have {\em degree} $k$ over $\F_q$ if $\F_{q^k}$ is the smallest extension of $\F_q$ that contains $\theta$.

\begin{thm}[\cite{Katz}]\label{Katz}
Let $\theta$ be an element of degree $n$ over $\mathbb{F}_{q}$ and $\chi$ a non-trivial multiplicative character of $\mathbb{F}_{q^{n}} .$ Then
$$
\left|\sum_{a \in \mathbb{F}_{q}} \chi(\theta+a)\right| \leq(n-1) \sqrt{q}.
$$
\end{thm}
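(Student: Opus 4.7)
The plan is to combine Fourier analysis on $\F_{q^n}^+$ with classical Gauss sum identities, reducing the estimate to a deeper mixed character sum estimate over a hyperplane which I would then invoke from Katz's $\ell$-adic machinery.

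First I would write the indicator function of the subfield $\F_q \subset \F_{q^n}$ via Pontryagin duality as
$$\mathbbm{1}_{\F_q}(x) \;=\; \frac{1}{q^{n-1}} \sum_{\psi \in H} \psi(x),$$
where $H \subset \widehat{\F_{q^n}^+}$ is the annihilator of $\F_q$ under the trace pairing, a subgroup of order $q^{n-1}$. Substituting into the target sum, interchanging summations, and translating $y = \theta + x$ in the inner sum yields
$$\sum_{a \in \F_q} \chi(\theta + a) \;=\; \frac{1}{q^{n-1}} \sum_{\psi \in H} \psi(-\theta)\, g(\chi, \psi),$$
where $g(\chi, \psi) = \sum_{y} \chi(y)\psi(y)$ is a Gauss sum. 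The contribution of the trivial $\psi$ vanishes since $\chi$ is non-trivial, and each of the remaining $q^{n-1} - 1$ Gauss sums has modulus exactly $q^{n/2}$.

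Next I would fix a non-trivial additive character $\psi_1$ and use the bijection $t \mapsto \psi_t$ with $\psi_t(x) = \psi_1(tx)$, where $t$ ranges over the hyperplane $T = \ker \operatorname{Tr}_{\F_{q^n}/\F_q}$ of order $q^{n-1}$. The standard identity $g(\chi, \psi_t) = \bar{\chi}(t)\, g(\chi, \psi_1)$ then rewrites everything as
$$\sum_{a \in \F_q} \chi(\theta + a) \;=\; \frac{g(\chi, \psi_1)}{q^{n-1}} \sum_{t \in T \setminus \{0\}} \bar{\chi}(t)\, \psi_1(-t\theta),$$
so that the claimed bound $(n-1)\sqrt{q}$ is equivalent to showing the hyperplane sum on the right has modulus at most $(n-1)\, q^{(n-1)/2}$, i.e., square-root cancellation in $q^{n-1} - 1$ terms together with a linear factor of $n-1$.

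The main obstacle is this final estimate: because $T$ carries no natural multiplicative structure, the classical Weil bound for sums $\sum_a \chi(f(a))$ with $f \in \F_q[x]$ of degree $n$ does not apply directly. Here I would invoke Katz's $\ell$-adic interpretation, in which the hyperplane sum is realized as a trace of Frobenius on the compactly supported cohomology of a tensor product of a Kummer sheaf and an Artin-Schreier sheaf on $T \cong \mathbb{A}^{n-1}_{\F_q}$. The hypothesis that $\theta$ has degree $n$ over $\F_q$ ensures that this sheaf is geometrically irreducible and non-trivial, an Euler-Poincar\'e computation bounds the relevant cohomological dimension by $n-1$, and Deligne's Riemann Hypothesis bounds each Frobenius eigenvalue by $q^{(n-1)/2}$, yielding the required estimate.
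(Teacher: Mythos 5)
First, a point of comparison: the paper does not prove this statement at all --- Theorem~\ref{Katz} is imported as a black box from \cite{Katz} --- so there is no in-paper argument to measure you against, and I will assess your sketch on its own terms. Your two reduction steps are correct: the expansion of $\mathbbm{1}_{\F_q}$ over the annihilator $H$, the vanishing of the trivial-character term, the identification of $H$ with the trace-zero hyperplane $T$ (for the canonical additive character), the identity $g(\chi,\psi_t)=\bar\chi(t)\,g(\chi,\psi_1)$, and the arithmetic showing that the target bound is equivalent to $\bigl|\sum_{t\in T\setminus\{0\}}\bar\chi(t)\psi_1(-t\theta)\bigr|\le (n-1)q^{(n-1)/2}$ all check out. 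But that is precisely where the proof stops being a proof: the hyperplane estimate is \emph{equivalent} to the theorem (you derived the equivalence yourself), so the entire difficulty has been relocated there untouched, and the paragraph meant to discharge it asserts rather than proves its two key claims. For a sum over an $(n-1)$-dimensional variety, the statement that only the middle compactly supported cohomology survives is strong: a priori the groups $H^i_c$ for $n-1<i\le 2(n-1)$ contribute terms as large as $q^{n-1}$, and geometric nontriviality of the sheaf only kills the top one; eliminating the rest is not a routine Euler--Poincar\'e computation. Invoking ``Katz's $\ell$-adic machinery'' to finish is circular when the goal is to prove Katz's theorem.

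The gap is all the more visible because the actual proof avoids your dualization entirely and works in one variable, where the cohomological bookkeeping really is routine. Over $\overline{\F}_q$, the rank-one object with trace function $a\mapsto\chi(\theta+a)$ lives on $\mathbb{A}^1$ minus the $n$ distinct conjugates of $-\theta$ (distinctness and geometric nontriviality are exactly where the degree-$n$ hypothesis enters); then $H^0_c=H^2_c=0$, Grothendieck--Ogg--Shafarevich gives $\dim H^1_c=n-1$ since everything is tame, and Deligne's weight bound yields $(n-1)\sqrt{q}$. Equivalently, there is a classical function-field route: $g\mapsto\chi(g(\theta))$ is a Dirichlet character modulo the minimal polynomial of $\theta$, its $L$-function is a polynomial of degree $n-1$ whose roots have absolute value $q^{-1/2}$ by Weil, and your sum is (up to sign) its linear coefficient. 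Your Gauss-sum step moves the problem from dimension $1$ to dimension $n-1$ and thereby makes the last step harder, not easier. To complete the write-up, either carry out the one-variable computation, present the $L$-function argument, or do what the paper does and simply cite \cite{Katz}.
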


Recently, Reis \cite{Reis} used Theorem~\ref{Katz} to prove the following character sum estimate over an affine space.

\begin{thm}[\cite{Reis}] \label{Reis}
Let $\mathcal{A}=u+\mathcal{V} \subseteq \mathbb{F}_{q^{n}}$ be an $\mathbb{F}_{q}$-affine space of dimension $t \geq 1$, where $n>1 .$ Suppose that there exists a nonzero element $y \in \mathcal{V}$ such that the set $\mathcal{A}_{y}:=\left\{a y^{-1} \mid a \in \mathcal{A}\right\}$ contains an element of degree $n$ over $\mathbb{F}_{q} .$ If $\chi$ is a non-trivial multiplicative character of $\mathbb{F}_{q^{n}}$, then 
\begin{equation} \label{sum}
\left|\sum_{a \in \mathcal{A}} \chi(a)\right|<n \cdot q^{t-1 / 2}.    
\end{equation}
\end{thm}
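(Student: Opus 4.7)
The plan is to reduce Theorem~\ref{Reis} to the one-dimensional character sum bound of Katz (Theorem~\ref{Katz}) by slicing the affine space $\mathcal{A}$ into parallel $\F_q$-lines.

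First I would normalize. Using the multiplicativity of $\chi$ and $|\chi(y)|=1$,
\[
\Bigl|\sum_{a \in \mathcal{A}} \chi(a)\Bigr| = \Bigl|\sum_{a' \in \mathcal{A}_y} \chi(a')\Bigr|,
\]
and $\mathcal{A}_y = uy^{-1} + \mathcal{V} y^{-1}$ is again an $\F_q$-affine space of dimension $t$. Its direction subspace contains $1 = y/y$, and by hypothesis $\mathcal{A}_y$ contains an element $\alpha$ of degree $n$ over $\F_q$. So I may pass to $\mathcal{A}_y$ and assume from the outset that $1\in \mathcal{V}$ and that $\mathcal{A}$ itself contains a degree-$n$ point.

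Next I would slice into lines. Picking an $\F_q$-complement $W \subset \mathcal{V}$ with $\mathcal{V} = \F_q \cdot 1 \oplus W$ gives a partition
\[
\mathcal{A} = \bigsqcup_{w \in W}\bigl(\theta_w + \F_q\bigr), \qquad \theta_w := u + w,
\]
so that
\[
\sum_{a \in \mathcal{A}} \chi(a) = \sum_{w \in W}\sum_{c \in \F_q}\chi(\theta_w + c).
\]
Call $w$ \emph{good} if $\theta_w$ has degree $n$ over $\F_q$, and \emph{bad} otherwise. For good $w$, Theorem~\ref{Katz} bounds the inner sum by $(n-1)\sqrt{q}$; for bad $w$, the trivial bound is $q$. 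With $B$ denoting the number of bad $w$, the triangle inequality gives
\[
\Bigl|\sum_{a \in \mathcal{A}} \chi(a)\Bigr| \leq Bq + (q^{t-1}-B)(n-1)\sqrt{q}.
\]

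The main obstacle is to show that this majorant is strictly less than $n\cdot q^{t-1/2}$, which amounts to controlling $B$. A point $\theta_w = u + w$ fails to have degree $n$ iff it lies in some proper subfield $\F_{q^k}$ (with $k \mid n$, $k < n$), i.e., iff $w \in (\F_{q^k} - u)\cap W$. Choosing $W$ generically relative to this finite list of affine subspaces --- and using the degree-$n$ hypothesis to ensure that at least the pivot line through $\alpha$ is good --- I expect $B$ to be at most of order $q^{t-1+k_{\max}-n}$, where $k_{\max}$ is the largest proper divisor of $n$; a short arithmetic check then turns the displayed inequality into the desired strict bound $<nq^{t-1/2}$. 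The base case $t = 1$ is immediate: the only line is good by hypothesis, and Katz directly gives $(n-1)\sqrt{q} < n\sqrt{q}$. For $t \geq 2$, the delicate part is making the "generic choice" of $W$ precise in a way that works for every $\mathcal{A}$ satisfying the hypothesis; an alternative approach would be to induct on $t$, peeling off one dimension at a time and arranging that enough of the resulting $(t-1)$-dimensional slices still contain a degree-$n$ element for the inductive estimate to dominate.
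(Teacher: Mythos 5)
The paper does not actually prove this statement; Theorem~\ref{Reis} is imported verbatim from \cite{Reis}, so there is no internal proof to compare against. Your overall strategy --- normalize by $y$, partition $\mathcal{A}_y$ into cosets of $\F_q\cdot 1$, apply Katz's bound (Theorem~\ref{Katz}) to the lines consisting of degree-$n$ elements, and use the trivial bound on the rest --- is the right one and is essentially how the result is proved in \cite{Reis}. Your reductions are sound: the sum is unchanged in absolute value when passing to $\mathcal{A}_y$, each line $\theta_w+\F_q$ consists of elements of a single degree (since $\F_q(\theta+c)=\F_q(\theta)$ for $c\in\F_q$), and the case $t=1$ is immediate.

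The gap is in the control of $B$. First, the ``generic choice of $W$'' has no content: the partition of $\mathcal{A}$ into lines is the partition into cosets of $\F_q\cdot 1$, which is canonical; choosing $W$ only changes the representatives $\theta_w$, and whether a line is good or bad does not depend on the representative. So $B$ is a fixed quantity that must be bounded, not one you can shrink by a clever choice. Second, your predicted size $B=O(q^{t-1+k_{\max}-n})$ is false in general: take $n=4$, $u=0$, and $\mathcal{V}=\F_{q^2}\oplus\F_q\beta$ with $\beta$ of degree $4$, so $t=3$ and the hypothesis holds; then $\mathcal{A}\cap\F_{q^2}=\F_{q^2}$ contributes $q=q^{t-2}$ bad lines, whereas your prediction gives $q^{t-1+2-4}=1$. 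Fortunately the argument closes without genericity: for each proper divisor $k$ of $n$, the set $\mathcal{A}\cap\F_{q^k}$ is an affine $\F_q$-subspace that is proper in $\mathcal{A}$ (because $\mathcal{A}$ contains a degree-$n$ element), hence has dimension at most $t-1$ and contains at most $q^{t-2}$ cosets of $\F_q\cdot 1$. Summing over proper divisors gives $B\leq (d(n)-1)\,q^{t-2}$, where $d(n)$ is the number of divisors of $n$. Your majorant then satisfies the required inequality exactly when $B(q-(n-1)\sqrt{q})<q^{t-1}\sqrt{q}$; this is automatic when $\sqrt{q}\leq n-1$ (the left side is nonpositive, and indeed the whole theorem is trivial in that range), and otherwise follows from $Bq\leq (d(n)-1)q^{t-1}<q^{t-1}\sqrt{q}$, since $d(n)-1\leq n-1<\sqrt{q}$. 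With this replacement for your counting step, the proof is complete.
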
 

The hypothesis on the existence of a degree $n$ element cannot be completely dropped. For equation \eqref{sum} to hold, we must have some assumptions on the structure of the affine space $\mathcal{A}$. Typically, we need to assume that $\mathcal{A}$ does not have a special algebraic structure. As a counter-example, let $\mathcal{A}=\mathcal{V}=\F_{q^{t}}$ be a proper subfield. Then we can find a non-trivial multiplicative character $\chi$ on $\F_{q^n}$ such that $\chi|_{A}$ is the trivial character; in this case, 
$\sum_{a \in \mathcal{A}} \chi(a) = q^t-1>n q^{t-1/2}$ holds for sufficiently large $q$, violating the conclusion in Theorem~\ref{Reis}. This construction is an example of the so-called \emph{subfield obstruction}.  

Chang \cite{Chang}*{Section 5} also gave a somewhat weaker nontrivial estimate of character sums over subspaces of finite fields. Here we use Theorem \ref{Reis} since it is stronger to some extent, and it does not involve any implicit constants, which can be challenging to determine.

The following corollary gives a non-trivial upper bound on the character sums over a subspace when $\mathcal{A}$ is a subspace but not a subfield. Corollary \ref{charsumcor} is a consequence of Theorem \ref{Reis} and \cite{Reis}*{Proposition 2.2}; here we include the proof for the sake of completeness.

\begin{cor}\label{charsumcor}
Let $n$ be an integer such that $n \geq 2$, and $q$ an odd prime power. Let $\mathcal{V} \subseteq \mathbb{F}_{q^{{2n}}}$ be an $\mathbb{F}_{q}$-space of dimension $n$, with $1 \in \mathcal{V}$, and $\mathcal{V} \neq \mathbb{F}_{q^{n}}$. Then for any non-trivial multiplicative character $\chi$ of $\mathbb{F}_{q^{2n}}$,
\begin{equation} \label{charsum}
\left|\sum_{x \in \mathcal{V}} \chi(x)\right|< \frac{2n}{\sqrt{q}} \cdot |\mathcal{V}|.    
\end{equation}
\end{cor}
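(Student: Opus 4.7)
The plan is to reduce the corollary to a direct invocation of Theorem~\ref{Reis}. Viewing $\mathcal{V}$ as an $\mathbb{F}_q$-affine subspace of $\mathbb{F}_{q^{2n}}$ with translation vector $u=0$ and dimension $t=n$, and applying Theorem~\ref{Reis} in the ambient extension $\mathbb{F}_{q^{2n}}/\mathbb{F}_q$ (so that the ``$n$'' of that theorem becomes $2n$ here), the conclusion would read
$$
\bigg|\sum_{x \in \mathcal{V}} \chi(x)\bigg| < 2n\cdot q^{n-1/2} = \frac{2n}{\sqrt{q}}\cdot q^{n} = \frac{2n}{\sqrt{q}}\cdot |\mathcal{V}|,
$$
which is exactly the desired bound \eqref{charsum}. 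So the entire corollary reduces to verifying the sole hypothesis of Theorem~\ref{Reis}: there exists a nonzero $y\in\mathcal{V}$ such that the scaled set $\mathcal{V} y^{-1}$ contains an element of degree $2n$ over $\mathbb{F}_q$.

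Because $1\in\mathcal{V}$, the natural first attempt is $y=1$, which reduces the task to exhibiting an element of $\mathcal{V}$ that is contained in no proper subfield of $\mathbb{F}_{q^{2n}}$. The assumption $\mathcal{V}\neq\mathbb{F}_{q^n}$, combined with $\dim_{\mathbb{F}_q}\mathcal{V} = n = \dim_{\mathbb{F}_q}\mathbb{F}_{q^n}$, forces $\mathcal{V}\not\subseteq \mathbb{F}_{q^n}$; moreover $\mathcal{V}\cap\mathbb{F}_{q^n}$ is a proper $\mathbb{F}_q$-subspace of $\mathcal{V}$ and thus has cardinality at most $q^{n-1}$. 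This handles the obstruction coming from the unique maximal subfield of dimension equal to $\dim\mathcal{V}$.

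The main obstacle is that an element of $\mathcal{V}\setminus\mathbb{F}_{q^n}$ might still land in some smaller subfield $\mathbb{F}_{q^d}$ with $d\mid 2n$, $d\nmid n$, and $d<2n$, so one must rule out all subfield obstructions simultaneously. This is exactly the content of \cite{Reis}*{Proposition~2.2}: a counting argument comparing $|\mathcal{V}|=q^n$ against the total contribution
$$
|\mathcal{V}\cap\mathbb{F}_{q^n}| + \sum_{\substack{d\mid 2n\\ d<2n,\,d\neq n}} q^{d} \;\leq\; q^{n-1} + \tau(2n)\cdot q^{2n/3}
$$
shows that for $n\geq 2$ the right-hand side is strictly less than $q^n$, so $\mathcal{V}$ must contain an element outside every proper subfield (after replacing $\mathcal{V}$ by $\mathcal{V} y^{-1}$ for a suitable $y\in\mathcal{V}^{\ast}$ if necessary, in order to keep $1$ in the scaled set and to rule out pathological ``subfield-like'' configurations). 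Once this degree-$2n$ element is produced, the hypothesis of Theorem~\ref{Reis} is met and the proof concludes with the display above.
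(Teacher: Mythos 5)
Your reduction to Theorem~\ref{Reis} is exactly the paper's: take $u=0$, $t=n$, ambient field $\F_{q^{2n}}$ (so the ``$n$'' of that theorem is $2n$), and $y=1$, so that everything hinges on producing an element of degree $2n$ in $\mathcal{V}$ itself, which both you and the paper do by comparing $|\mathcal{V}|=q^n$ against the number of elements lying in proper subfields. The one concrete problem is your final counting inequality: the claim that $q^{n-1}+\tau(2n)\cdot q^{2n/3}<q^n$ for all $n\geq 2$ is false for small parameters, e.g.\ for $n=2$, $q=3$ one has $3+\tau(4)\cdot 3^{4/3}\approx 16>9=q^n$ (and similarly for $q=5$), so as written this step would fail. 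The inequality you actually need is true, but you must exploit that every divisor $d$ of $2n$ with $d<2n$ and $d\neq n$ satisfies $d\leq 2n/3<n$, hence $d\leq n-1$ since $d$ is an integer; therefore the union of the corresponding subfields has size at most $\sum_{i=1}^{n-1}q^{i}=\frac{q^{n}-q}{q-1}$, which for odd $q\geq 3$ is less than $q^{n}-q^{n-1}$, the number of elements of $\mathcal{V}$ outside $\mathcal{V}\cap\F_{q^{n}}$. This is precisely the estimate the paper carries out; with that repair (and dropping the unnecessary hedge about rescaling by some other $y$, since $y=1$ suffices) your argument is complete and identical in substance to the paper's proof.
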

\begin{proof}

It suffices to show that if $\mathcal{V}$ has size $q^{n}$ and $\mathcal{V}\neq \F_{q^{n}}$, then $\mathcal{V}$ has an element with degree $2n$. To prove this assertion, note that the number of elements in the union of $\F_{q^{d}}$ as $d\mid 2n$ with $d<n$ is less than $q^{n-1}$. Suppose that $\mathcal{V}$ has size $q^{n}$, and $\mathcal{V}\neq \F_{q^{n}}$. If $\mathcal{V}$ did not have any element of degree $2n$, then all the elements of $\mathcal{V}\setminus \F_{q^{n}}$ would be contained in $\bigcup_{\substack{d\mid 2n, d<n}} \F_{q^d}$. Now, $\mathcal{V}\cap \F_{q^{n}}$ has dimension at most $n-1$, which means that at least $q^{n}-q^{n-1}$ many elements of $\mathcal{V}$ must belong to  $\bigcup_{\substack{d\mid 2n, d<n}} \F_{q^d}$. This is a contradiction because for $q\geq 3$, we have
\[
\bigg|\bigcup_{\substack{d\mid 2n, d<n}} \F_{q^d} \bigg| \leq \sum_{i=1}^{n-1} q^{i} = \frac{q^n-1}{q-1} - 1 \leq \frac{q^n-1}{2}-1<q^{n}-q^{n-1}.   \qedhere
\]
\end{proof}

Note that if $n$ is an odd prime, then any element in $\F_{q^n} \setminus \F_q$ has degree $n$. Thus, we obtain the following corollary of Theorem~\ref{Reis}, which will be used in Section~\ref{sect:maximal}.

\begin{cor} \label{prime}
Let $\mathcal{V} \subseteq \mathbb{F}_{q^{n}}$ be an $\mathbb{F}_{q}$-subspace of dimension $2$ , where $q$ is an odd prime power, $n$ is an odd prime with $q>n^2$, and $1 \in \mathcal{V}$.  If $\chi$ is a non-trivial multiplicative character of $\mathbb{F}_{q^{n}}$, then 
\begin{equation} \label{summ}
\left|\sum_{v \in \mathcal{V}} \chi(v)\right|<|\mathcal{V}|-1.    
\end{equation}
\end{cor}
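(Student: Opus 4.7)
The plan is a direct application of Theorem~\ref{Reis} with $\mathcal{A}=\mathcal{V}$, $u=0$, and $t=2$, followed by a short numerical check that the resulting bound is strictly smaller than $|\mathcal{V}|-1=q^2-1$ under the hypothesis $q>n^2$.

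First I would verify the degree condition of Theorem~\ref{Reis}. Choosing $y=1\in \mathcal{V}$ gives $\mathcal{A}_y=\mathcal{V}y^{-1}=\mathcal{V}$, so one only needs to exhibit an element of $\mathcal{V}$ of degree $n$ over $\F_q$. Since $\mathcal{V}$ is a $2$-dimensional $\F_q$-subspace containing $1$, it contains the $1$-dimensional subspace $\F_q$ properly, and any $v\in\mathcal{V}\setminus \F_q$ has degree strictly greater than $1$ over $\F_q$. But $n$ is prime, so the only possible degrees of elements of $\F_{q^n}$ over $\F_q$ are $1$ and $n$; hence $v$ has degree exactly $n$. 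Theorem~\ref{Reis} then yields
$$\left|\sum_{v\in\mathcal{V}}\chi(v)\right| < n\cdot q^{2-1/2}=nq^{3/2}.$$

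It remains to verify the elementary inequality $nq^{3/2}\leq q^2-1$. Since $q$ and $n^2$ are integers with $q>n^2$, we have $q\geq n^2+1$, and so
$$\sqrt{q}-n=\frac{q-n^2}{\sqrt{q}+n}\geq \frac{1}{\sqrt{q}+n}, \quad\text{hence}\quad q-n\sqrt{q}=\sqrt{q}(\sqrt{q}-n)\geq \frac{\sqrt{q}}{\sqrt{q}+n}\geq \frac{1}{2},$$
using $\sqrt{q}>n$ at the last step. Multiplying by $q\geq n^2+1\geq 10$ yields $q^2-nq^{3/2}\geq q/2\geq 1$, i.e., $nq^{3/2}\leq q^2-1=|\mathcal{V}|-1$, and combining this with the strict Reis bound gives $\left|\sum_{v\in\mathcal{V}}\chi(v)\right|<|\mathcal{V}|-1$.

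The only potential obstacle is ensuring the degree hypothesis in Theorem~\ref{Reis}, but the primality of $n$ makes this immediate: every element of $\F_{q^n}$ outside $\F_q$ has degree exactly $n$, so any $2$-dimensional $\F_q$-subspace of $\F_{q^n}$ containing $1$ automatically supplies such an element. The rest reduces to the short arithmetic verification above that uses $q>n^2$.
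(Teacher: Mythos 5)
Your proposal is correct and follows essentially the same route as the paper: both apply Theorem~\ref{Reis} with $t=2$, using the primality of $n$ to guarantee that any element of $\mathcal{V}\setminus\F_q$ has degree $n$, and then conclude with an elementary verification that $nq^{3/2}<q^2-1$ when $q\geq n^2+1$. Your arithmetic check is valid (note $n\geq 3$ gives $q\geq 10$, as you use), and differs only cosmetically from the paper's rearrangement of the same inequality.
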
 
\begin{proof}
In view of Corollary~\ref{charsumcor}, it suffices to show that,
\begin{align*}
\frac{n}{\sqrt{q}} |\mathcal{V}| < |\mathcal{V}|-1 
\end{align*}
for $q\geq n^2+1$. Using $|\mathcal{V}|=q^2$, the above inequality is equivalent to:
\begin{align}\label{eq:corollary-prime}
\frac{n^2}{q} < \left(1-\frac{1}{q^2}\right)^2
\end{align}
The inequality~\eqref{eq:corollary-prime} follows directly from combining the two inequalities below:
\begin{align*}
    \frac{n^2}{q}\leq\frac{q-1}{q}=1-\frac{1}{q}   \ \ \ \ \text{and}  \ \ \ 1-\frac{1}{q}<\left(1-\frac{1}{q^2}\right)^2
\end{align*}
for each $q\geq 2$. \end{proof}

\section{Maximum cliques in a Peisert-type graph}\label{sect:main-proofs}

We start the section by proving Lemma \ref{Ptlem}. In Section~\ref{subsect-proof-main-results}, we prove our main theorems and mention a few corollaries. Finally, in Section~\ref{sect:apply}, we classify maximum cliques in generalized Peisert graphs as an application of our main theorem. 

\subsection{Proof of Lemma \ref{Ptlem}}\label{subsect:proof-of-Ptlem}
As a motivation for Definition~\ref{defn:peisert-type}, Lemma \ref{Ptlem} states that (generalized) Paley and Peisert graphs are special instances of Peisert-type graphs. 

\begin{proof}[Proof of Lemma \ref{Ptlem}]
The first two families of graphs are special cases of the last two. Thus, it suffices to prove the claim for generalized Paley graphs and generalized Peisert graphs. 

A generalized Paley graph $GP(q^2,d)$, where $d \mid (q+1)$ and $d>1$, is of the form $\operatorname{Cay}(\F_{q^2}^{+}, S)$ where $S$ consists of all $d$-th powers in $\F_{q^2}^{\ast}$. Since $d\mid (q+1)$, we have $\F_q^{\ast}\subset S$.  Note that $S$ is the subgroup obtained by taking the image of the group homomorphism $\F_{q^2}^{\ast}\to\F_{q^2}^{\ast}$ sending $x\mapsto x^d$. It follows that $|S|=\frac{q^2-1}{d}$. Finally, $\F_{q}^{\ast}$ is a subgroup of $S$ with index
$$
\frac{|S|}{|\F_q^{\ast}|} = \frac{(q^2-1)/d}{q-1}=\frac{q+1}{d}
$$
which allows us to write,
$$
S=c_1 \F_q^{\ast} \cup \cdots \cup c_m \F_q^{\ast}
$$
with $c_1=1$ and $m=\frac{q+1}{d}\leq \frac{q+1}{2}$. Thus, $GP(q^2,d)$ is a Peisert-type graph.

For the generalized Peisert graph $GP^*(q^2,d)$, where $d \mid (q+1)$ and $d$ is even, the connection set $S$ can be expressed as a disjoint union:
$$
S = \bigsqcup_{i=0}^{d/2-1} g^{i} (\F_{q^2}^{\ast})^d. 
$$
It follows that $GP^*(q^2,d)$ is the edge-disjoint union of $\frac{d}{2}$ copies of $GP(q^2, d)$. Since each $g^{i} (\F_{q^2}^{\ast})^d$ can be expressed as a union of $\frac{q+1}{d}$ cosets of $\F_q^{\ast}$, we can express $S$ above as,
$$
S=c_1 \F_q^{\ast} \cup \cdots \cup c_m \F_q^{\ast}
$$
with $c_1=1$ and $m=\frac{q+1}{d} \cdot \frac{d}{2}=\frac{q+1}{2}$. Thus, $GP^*(q^2,d)$ is a Peisert-type graph.
\end{proof}

\subsection{Proof of main results and consequences}\label{subsect-proof-main-results}
With the two tools described in the previous section, we are ready to prove Theorem \ref{subspacethm}. 

\begin{proof}[Proof of Theorem \ref{subspacethm}]
Let $v \in \F_{q^2}^* \setminus S$. Since $\F_q^* \subset S$, $\{1,v\}$ forms a basis of $\F_{q^2}$ over $\F_q$. We consider the following standard embedding $\pi$ of $\F_{q^2}$ into $AG(2,q)$, where 
\[
\pi(a+bv)=(a,b), \quad \forall a,b \in \F_q.
\]
It is clear that the subfield $\F_q$ forms a clique in the Cayley graph $X$, so $\omega(X)\geq q$. 

Let $C$ be a maximum clique in $X$ containing $0$. We can express $U$, the image of $C$ under the embedding $\pi$ as 
\[
U=\pi(C)=\{(a_j,b_j): 1 \leq j \leq |C|\} \subset AG(2,q).
\]
If there are $1 \leq j<k \leq |C|$ such that $a_j=a_k$, then $b_j \neq b_k$, and $(b_j-b_k)v \in S \cap v\F_q^*$ since $C$ is a clique. Since $S$ is a union of cosets of $\F_q^*$, this implies that $v \in S$, contradicting our assumption. This also rules out the possibility that $|C| \geq q+1$ by the pigeonhole principle. Therefore, $|C|=\omega(X)=q$. 

Let $S=\bigcup_{j=1}^{m} c_j \F_q^*$. Note that $C=\{a_j+b_jv: 1\leq j \leq q\}$. As we saw above, for any $1 \leq j<k \leq q$, we have $a_j \neq a_k$. Since $C$ is a clique, we have $(a_j+b_jv)-(a_k+b_kv)=(a_j-a_k)+(b_j-b_k)v \in S$; it follows that 
\begin{align*}
1+\frac{b_j-b_k}{a_j-a_k} v \in S \cap (1+\F_q v),
\end{align*}
since $S$ is a union of cosets of $\F_q^*$, $a_j-a_k \in \F_q^*$, and $b_j-b_k \in \F_q$.

Notice that 
\[
S \cap (1+\F_q v)= \bigcup_{j=1}^{m} \big(c_j \F_q^*\cap (1+\F_q v)\big).
\]
We claim that $|c_j \F_q^*\cap (1+\F_q v)| \leq 1$ for each $1 \leq j \leq m$. Indeed, if $x, y\in c_j \F_q^*\cap (1+\F_q v)$, then $x-y= t v$ for some $t \in \F_q$; combining $t v \in c_j \F_q$ with $v\notin S$, we get $t=0$, and so $x=y$.

As we saw above, $U$ does not determine the vertical direction. Since $|U|=q$, we must have that
\begin{align*}
U=\{(x,f(x)): x \in \F_q\}
\end{align*}
is the graph of some function $f:\F_q \to \F_q$ such that $f(0)=0$. Recall that the direction set $\mathcal{D}_U$ consists of all ratios $\frac{b_j-b_k}{a_j-a_k}$ for $1\leq j < k \leq q$. We have shown that, 
\[
1+\mathcal{D}_U \cdot v \subset  S \cap (1+\F_q v)= \bigcup_{j=1}^{m} \big(c_j \F_q^*\cap (1+\F_q v)\big) 
\]
has size at most $m \leq \frac{q+1}{2}$. Equivalently, $|\mathcal{D}_U|\leq \frac{q+1}{2}$. By Theorem \ref{Ball03}, it follows that $U$ is $K$-linear for some subfield $K\subset \F_q$. In particular, $U$ is $\F_p$-linear, and so $C$ is an $\F_p$-subspace of $\F_{q^2}$.
\end{proof}

Inspired by the proof of Theorem~\ref{subspacethm}, we provide a sufficient condition for a maximum clique in a Peisert-type graph on $\F_{q^2}$ to be the subfield $\F_q$. 

\begin{cor} \label{cor:improvement}
Let $X=\operatorname{Cay}(\F_{q^2}^+, S)$ be a Peisert-type graph, where $q=p^n$. Suppose that 
$m=|S|/(q-1)\leq p^{n-k}$, where $k$ is the largest proper divisor of $n$. Then the only maximum clique in $X$ containing $0,1$ is the subfield $\F_q$. 
\end{cor}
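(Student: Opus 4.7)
The plan is to closely follow the proof of Theorem~\ref{subspacethm} and upgrade its conclusion (that any maximum clique is an $\F_p$-subspace) to the sharper assertion that any maximum clique containing $0,1$ equals the subfield $\F_q$. The crucial new ingredient is the quantitative portion of Theorem~\ref{Ball03}, which gives a lower bound on the number of directions determined whenever the relevant set is linear over a proper subfield.

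First, mimicking the setup of Theorem~\ref{subspacethm}, I pick $v\in \F_{q^2}^{*}\setminus S$ (which exists since $m\leq (q+1)/2<q+1$), use the embedding $\pi(a+bv)=(a,b)$, and let $U=\pi(C)$ where $C$ is a maximum clique containing $0,1$. That proof then yields $|C|=|U|=q$, shows that $U$ does not determine the vertical direction so that $U$ is itself the graph of a function $f\colon \F_q\to \F_q$, and bounds the direction set by $|\mathcal{D}_U|\leq m$.

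Next, I apply Theorem~\ref{Ball03} to $f$. Since $m\leq (q+1)/2<(q+3)/2$, the first alternative of the theorem is excluded, so the graph of $f$ is $K$-linear for some subfield $K\subseteq \F_q$, with $|\mathcal{D}_U|\geq q/|K|+1$ whenever $K\neq \F_q$. Writing $K=\F_{p^d}$ with $d\mid n$ and $d<n$, the definition of $k$ as the largest proper divisor of $n$ forces $d\leq k$, so $|K|\leq p^k$, and hence
$$|\mathcal{D}_U|\geq q/|K|+1 \geq p^{n-k}+1 > m,$$
contradicting $|\mathcal{D}_U|\leq m$. Therefore $K=\F_q$.

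To finish, the fact that $U$ is $\F_q$-linear with $|U|=q$ means that $U$ is a $1$-dimensional $\F_q$-subspace of the canonical $\F_q$-vector space $AG(2,q)$, that is, a line through the origin; this intrinsic property is independent of the auxiliary embedding appearing in the definition of $K$-linearity. Pulling back through $\pi$ then shows that $C$ is a $1$-dimensional $\F_q$-subspace of $\F_{q^2}$, and since $1\in C$ the subspace must be $\F_q$ itself. I anticipate no serious obstacle: the argument is a direct quantitative enhancement of Theorem~\ref{subspacethm}, and the only mild subtlety is the final transition from abstract $\F_q$-linearity to the concrete conclusion $C=\F_q$, which is handled by noting that the $\F_q$-scalar action on $AG(2,q)$ is canonical and commutes with any embedding.
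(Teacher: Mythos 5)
Your proposal is correct and follows essentially the same route as the paper's proof: embed the clique into $AG(2,q)$ as in Theorem~\ref{subspacethm}, bound the direction set by $m$, and use the quantitative clause of Theorem~\ref{Ball03} to force $K=\F_q$, since $K\neq\F_q$ would give $|\mathcal{D}_U|\geq q/|K|+1\geq p^{n-k}+1>m$. The only difference is that you spell out the final passage from $\F_q$-linearity of $U$ to $C=\F_q$, which the paper leaves implicit.
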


\begin{proof}
Let $C$ be a maximum clique in $X$ such that $0\in C$. Following the proof of Theorem \ref{subspacethm}, if we embed $C$ into $AG(2,q)$, then its image $U$ determines at most $m$ directions and $U$ is $K$-linear, where $K$ is a subfield of $\F_q$. If $K \neq \F_q$, then Theorem \ref{Ball03} implies that $m \geq q/|K|+1 \geq p^{n-k}+1$, contradicting the assumption. Therefore, $U$ is $\F_q$-linear, that is, $C$ is an $\F_q$-subspace. In particular, if $1 \in C$, then $C$ must be the subfield $\F_q$.
\end{proof}

\begin{rem}
There are counterexamples of Peisert-type graphs with $m=\frac{p^n-1}{p^k-1}$ where $\F_q$ is \emph{not} the only maximum clique containing $0$ and $1$. Explicit constructions can be found in \cite{AGLY22}*{Section 5.1}. Note that $\frac{p^n-1}{p^k-1}$ is only slightly bigger than $p^{n-k}$, which means that the previous corollary is nearly sharp. 
\end{rem}

With the help of Theorem~\ref{extension}, namely the stability result on the direction set, the proof of Theorem~\ref{subspacethm} can be easily modified to prove Theorem~\ref{stabilitythm}.

\begin{proof}[Proof of Theorem~\ref{stabilitythm}]
Let $C$ be a clique in $X$, such that $0 \in C$ and $|C|>q-\big(1-m/(q+1)\big)\sqrt{q}$. Similar to the proof of Theorem~\ref{subspacethm}, we can embed $C$ into $AG(2,q)$ using the same map $\pi$ to obtain $U \subset AG(2,q)$. By the exact same reasoning, $U$ determines at most $m<\frac{q+1}{2}$ directions, and $U$ does not determine the vertical direction. 

We can now apply Theorem~\ref{extension} with $|U|=q-k$ where $k<\left(1-\frac{m}{q+1}\right)\sqrt{q}$ and $\alpha=1-\frac{m}{q+1}-\varepsilon\in [1/2, 1]$ for a sufficiently small $\varepsilon>0$. This allows us to extend $U$ to a larger set $U' \subset AG(2,q)$, such that $|U'|=q$, and $U'$ determines the same set of directions as $U$; in particular, $1+\mathcal{D}_{U}v=1+\mathcal{D}_{U'}v$. Let $(a,b)\neq (a',b') \in U'$; then $a \neq a'$, and 
\[
1+\frac{b'-b}{a'-a} v \in 1+\mathcal{D}_{U'}v \subset S.
\]
It follows that $(a'-a)+(b'-b)v \in S$; in other words, the preimage $C'=\pi^{-1}(U')$ forms a maximum clique in $X$. The required claim follows from Theorem \ref{subspacethm}.
\end{proof}

Now we are ready to present the proof of Theorem \ref{subfieldthm}.

\begin{proof}[Proof of Theorem \ref{subfieldthm}]
We can always assume $\varepsilon\leq 1$ in the statement of the theorem since characters have absolute value at most $1$. Suppose that there is a nontrivial multiplicative character $\chi$ of $\F_{q^2}$ such that the set $\{\chi(x): x \in S\}$ is $\varepsilon$-lower bounded. 
By Theorem \ref{subspacethm}, if $0\in C$ is a clique in the Peisert-type graph $X$, with $|C|=q$, then $C$ is an $\F_p$-subspace of $\F_{q^2}$. 

Suppose $0,1 \in C$, and $C \neq \F_q$. Then $C$ is an $\F_p$-subspace of $\F_{q^2}=\F_{p^{2n}}$ and $C\setminus \{0\} \subset S$. By Corollary \ref{charsumcor}, 
\[
\left|\sum_{x \in C} \chi(x)\right|< \frac{2n}{\sqrt{p}} \cdot |C|.    
\]
On the other hand, the set $\{\chi(x): x \in S\}$ is $\varepsilon$-lower bounded, which guarantees that
$$
\left|\sum_{x \in C} \chi(x)\right| \geq \varepsilon(|C|-1).
$$
Comparing these two inequalities, we obtain
$$
p < \frac{4n^2}{\varepsilon^2} \left(\frac{q}{q-1}\right)^2. 
$$
Now, $p>4n^2\geq 16$ and thus $q\geq p^2 \geq 17^2=289$. Thus, 
\begin{align*}
p<\left(\frac{4n^2}{\varepsilon^2}\right)\left(\frac{289}{288}\right)^2\approx \frac{4.02783n^2}{\varepsilon^2}
\end{align*} which contradicts our hypothesis. Therefore, if $0,1 \in C$, then $C=\F_q$. 
\end{proof}

\begin{rem}
Under additional conditions on the size of the connection set, the range for the values of $p$ in Theorem~\ref{subfieldthm} can be improved. We already saw this phenomenon in Corollary~\ref{cor:improvement}. More generally, suppose that $\frac{|S|}{q-1}\leq q^{1-\delta}$ where $\delta=1/s$ and $s$ is a proper divisor of $n$. Then using Theorem~\ref{Ball03} and applying Theorem~\ref{Reis} over the base field $\F_{q^{\delta}}=\F_{p^{n/s}}$, we see that Theorem~\ref{subfieldthm} will hold under the weaker hypothesis $p^{n/s} > \frac{9s^2}{\varepsilon^2}$.
\end{rem}

The following corollary follows immediately from Theorem~\ref{subfieldthm} and Theorem~\ref{stabilitythm}.

\begin{cor}
Under the same assumptions as in Theorem~\ref{subfieldthm}, if $|S|<\frac{q^2-1}{2}$, then for any clique $C$ in $X$ with $|C|>q-\big(1-\frac{m}{q+1}\big)\sqrt{q}$ and $0,1 \in C$, we have $C \subset \F_q$.
\end{cor}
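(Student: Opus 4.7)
The plan is to combine Theorem~\ref{stabilitythm} and Theorem~\ref{subfieldthm}: the former will enlarge $C$ to a maximum clique, and the latter will identify that maximum clique as the subfield $\F_q$.

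First, I would verify the hypothesis of Theorem~\ref{stabilitythm}. Since $m=|S|/(q-1)$ and $(q^2-1)/2=(q-1)(q+1)/2$, the inequality $|S|<(q^2-1)/2$ translates directly to $m<(q+1)/2$, which is precisely the non-degeneracy assumption required there. The clique size bound $|C|>q-(1-m/(q+1))\sqrt{q}$ and the condition $0\in C$ are given outright.

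Next, rather than invoking only the stated conclusion of Theorem~\ref{stabilitythm} (namely, that $C$ sits inside an $\F_p$-subspace of dimension $n$), I would revisit its proof in order to extract the stronger fact that $C$ actually extends to a maximum clique $C'\supset C$ in $X$ of size $q$. Concretely, embedding $C$ in $AG(2,q)$ as $U=\pi(C)$ via a basis $\{1,v\}$ with $v\in \F_{q^2}^{\ast}\setminus S$ yields a set that excludes the vertical direction and determines fewer than $(q+1)/2$ directions; Theorem~\ref{extension} then produces $U'\supset U$ of size $q$ with the same direction set, and the computation in the proof of Theorem~\ref{stabilitythm} shows that $C'=\pi^{-1}(U')$ is a clique in $X$. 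Since $0,1\in C\subset C'$ and $|C'|=q=\omega(X)$, the set $C'$ is a maximum clique containing both $0$ and $1$.

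Finally, I would apply Theorem~\ref{subfieldthm} to $C'$; its hypotheses (the $\varepsilon$-lower bounded character values and $p>4.1n^2/\varepsilon^2$) are inherited verbatim from the present corollary. This identifies $C'$ with the subfield $\F_q$, and therefore $C\subset C'=\F_q$. The only subtle point in the argument is that the statement of Theorem~\ref{stabilitythm} alone is not quite sufficient, since it places $C$ inside some $\F_p$-subspace of dimension $n$ which need not itself be a clique in $X$. The remedy, and the main thing one needs to check, is that the extension procedure inside the proof of Theorem~\ref{stabilitythm} actually produces a \emph{maximum clique} $C'$ containing $C$; once this is in hand, the corollary follows as a direct composition of the two main theorems.
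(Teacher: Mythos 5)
Your proposal is correct and takes essentially the same route as the paper, which simply asserts that the corollary ``follows immediately from Theorem~\ref{subfieldthm} and Theorem~\ref{stabilitythm}.'' You are right to flag that the stated conclusion of Theorem~\ref{stabilitythm} alone is not quite enough, and your fix --- extracting from its proof the fact that $C$ extends to a maximum clique $C'$ containing $0,1$, then applying Theorem~\ref{subfieldthm} to $C'$ --- is exactly the intended argument.
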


\subsection{Application to generalized Peisert graphs}\label{sect:apply}

We start with the following lemma, concerning the connection set of a generalized Peisert graph.

\begin{lem}\label{GP*lem} 
Let $d \geq 4$ be an even integer, and $\theta=\exp(2\pi i/d)$. Then the set $M=\{\theta^j :0 \leq j \leq d/2-1\}$ is $\left(\frac{\pi}{d}-\frac{\pi}{d^2}\right)$-lower bounded.
\end{lem}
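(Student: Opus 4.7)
The plan is to bound $\left|\sum_{j=1}^k x_j\right|$ from below by projecting the sum onto a carefully chosen line in $\mathbb{C}$. Writing each element of $M$ as $\theta^j = e^{2\pi i j/d}$ for $0 \leq j \leq d/2-1$, I first observe that every such element has argument in the arc $[0, \pi - 2\pi/d]$, which lies strictly in the closed upper half-circle. The natural choice of rotation angle is the midpoint $\alpha := \pi/2 - \pi/d$ of this arc.

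For any $x_1, \ldots, x_k \in M$, write $x_j = e^{i\phi_j}$, so that $\phi_j - \alpha \in [-\pi/2 + \pi/d,\, \pi/2 - \pi/d]$. Since $|z| \geq \operatorname{Re}(z)$ for every $z \in \mathbb{C}$, and since multiplication by $e^{-i\alpha}$ preserves absolute value, I would compute
\begin{equation*}
\left|\sum_{j=1}^k x_j\right| = \left|e^{-i\alpha}\sum_{j=1}^k x_j\right| \geq \sum_{j=1}^k \cos(\phi_j - \alpha) \geq k \cos\!\left(\pi/2 - \pi/d\right) = k\sin(\pi/d),
\end{equation*}
where the second inequality uses that cosine attains its minimum on the symmetric interval $[-\pi/2+\pi/d,\,\pi/2-\pi/d]$ at its endpoints.

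To convert the bound $k\sin(\pi/d)$ into the claimed form, I would apply the elementary Taylor estimate $\sin(x) \geq x - x^3/6$ for $x \geq 0$ with $x = \pi/d$, obtaining $\sin(\pi/d) \geq \pi/d - \pi^3/(6d^3)$. The desired inequality $\sin(\pi/d) \geq \pi/d - \pi/d^2$ then reduces to $\pi^3/(6d^3) \leq \pi/d^2$, equivalently $d \geq \pi^2/6 \approx 1.65$, which is comfortably satisfied under the hypothesis $d \geq 4$.

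I do not anticipate any significant obstacle: the entire argument is an elementary trigonometric estimate. The only step requiring minor insight is the choice of $\alpha$ as the midpoint of the arc of arguments, which optimizes the projection-based lower bound; after that, the bound on $\sin(\pi/d)$ via Taylor expansion is routine.
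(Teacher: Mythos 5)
Your proof is correct, and it reaches exactly the same intermediate bound as the paper — note that your $\sin(\pi/d)$ equals the paper's $\sqrt{\tfrac{1-\cos(2\pi/d)}{2}}$ by the half-angle identity — but by a cleaner route. The paper writes the sum as $\sum_{j=0}^{d/2-1} a_j\theta^j$ with multiplicities $a_j$, sets $b=\sum_{j\geq 1}a_j$, and splits into two cases according to whether $a_0\leq b\cos\frac{2\pi}{d}$ or not, bounding the imaginary part alone in the first case and combining real and imaginary parts (plus an AM--GM step) in the second. Your rotation-and-projection argument replaces this case analysis with a single inequality: rotate by the midpoint angle $\alpha=\pi/2-\pi/d$ of the arc of arguments and use $|z|\geq\operatorname{Re}(z)$, so that each summand contributes at least $\cos(\pi/2-\pi/d)=\sin(\pi/d)$. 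This is both shorter and more transparent about why the constant $\sin(\pi/d)$ appears (it is tight for $k=2$ with $x_1=1$, $x_2=\theta^{d/2-1}$). Your final reduction of $\sin(\pi/d)\geq \pi/d-\pi/d^2$ to $d\geq\pi^2/6$ via the Taylor bound $\sin x\geq x-x^3/6$ also makes explicit a step the paper leaves as ``one can check.'' All steps are sound: the interval $[-\pi/2+\pi/d,\,\pi/2-\pi/d]$ lies inside $[-\pi/2,\pi/2]$, where cosine is nonnegative and minimized at the endpoints of any symmetric subinterval, and the argument handles repeated choices of elements of $M$ as the definition of $\varepsilon$-lower boundedness requires.
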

\begin{proof}
Let $a_j$ be a non-negative integer for each $0\leq j\leq d/2-1$, and $b=\sum_{j=1}^{d/2-1} a_j$. We have
\begin{align*}
\Im \bigg(\sum_{j=0}^{d/2-1} a_j \theta^j\bigg) =\sum_{j=1}^{d/2-1} a_j \sin\left (\frac{2\pi j}{d}\right) \geq \sum_{j=1}^{d/2-1} a_j \sin\left (\frac{2\pi}{d}\right) = b\sin \frac{2\pi}{d}.    
\end{align*}
If $a_0 \leq b \cos \frac{2\pi}{d}$, then 
\begin{align*}
\bigg|\sum_{j=0}^{d/2-1} a_j \theta^j\bigg| \geq \Im \bigg(\sum_{j=0}^{d/2-1} a_j \theta^j\bigg) \geq b\sin \frac{2\pi}{d} = b\left(\frac{1+\cos \frac{2\pi}{d}}{1+\cos \frac{2\pi}{d}}\right)\sin\left(\frac{2\pi}{d}\right)  \geq (b+a_0) \frac{\sin \frac{2\pi}{d}}{1+\cos \frac{2\pi}{d}}.
\end{align*}
If $a_0> b \cos \frac{2\pi}{d}$, then 
\begin{align*}
\Re \bigg(\sum_{j=0}^{d/2-1} a_j \theta^j\bigg) =a_0+\sum_{j=1}^{d/2-1} a_j \cos\left (\frac{2\pi j}{d}\right) \geq a_0-\sum_{j=1}^{d/2-1} a_j \cos\left (\frac{2\pi}{d}\right) = a_0-b\cos \frac{2\pi}{d}> 0;
\end{align*}
therefore, 
\[
\bigg|\sum_{j=0}^{d/2-1} a_j \theta^j\bigg|^2 \geq
\bigg(a_0-b \cos \frac{2\pi}{d}\bigg)^2+\bigg(b\sin \frac{2\pi}{d}\bigg)^2=(a_0+b)^2-2a_0b \bigg(1+\cos \frac{2\pi}{d}\bigg) \geq (a_0+b)^2 \frac{1-\cos \frac{2\pi}{d}}{2}.
\]
Note that we always have
\[
\frac{\sin \frac{2\pi}{d}}{1+\cos \frac{2\pi}{d}} \geq \sqrt{\frac{1-\cos \frac{2\pi}{d}}{2}}.
\]
Therefore, $M$ is $\sqrt{\frac{1-\cos \frac{2\pi}{d}}{2}}$-lower bounded. As $d \to \infty$, $M$ is $\frac{\pi}{d}+O(d^{-2})$-lower bounded. In particular, one can check that $\sqrt{\frac{1-\cos \frac{2\pi}{d}}{2}}\geq \frac{\pi}{d}-\frac{\pi}{d^2}$ using Taylor series expansion, and thus, $M$ is $\left(\frac{\pi}{d}-\frac{\pi}{d^2}\right)$-lower bounded.
\end{proof}

Now we are ready to prove Theorem \ref{GP*thm}.

\begin{proof}[Proof of Theorem \ref{GP*thm}]
Let $g$ be a primitive root of $\F_{q^2}^*$, and $\theta=\exp(2\pi i/d)$. Let $\chi$ be the multiplicative character in $\F_{q^2}$ such that $\chi(g)=\theta$. Recall that the connection set $S$ of the generalized Peisert graph $X=GP^*(q^2,d)$ is given by
$$
S=\bigg\{g^{dk+j}: 0\leq j \leq \frac{d}{2}-1, k \in \Z\bigg\}.
$$
Therefore, $\{\chi(x):x \in S\}=\{\theta^j :0 \leq j \leq d/2-1\}$ is $\left(\frac{\pi}{d}-\frac{\pi}{d^2}\right)$-lower bounded by Lemma \ref{GP*lem}. We have shown that $X$ is a Peisert-type graph in Lemma \ref{Ptlem}; therefore, we can apply Theorem \ref{subfieldthm} to get the desired characterization of maximum cliques in $X$ provided that
\[
p>\frac{4.1n^2}{\big(\frac{\pi}{d}-\frac{\pi}{d^2}\big)^2}=\frac{4.1n^2d^4}{\pi^2(d-1)^2}.\qedhere
\]
\end{proof}

\section{Maximal cliques in generalized Paley graphs and Peisert graphs}\label{sect:maximal}

Recall that our strategy to classify maximum cliques is the following: first show that every maximum clique has a subspace structure, and then apply character sum estimates over a subspace to show a maximum clique must be a subfield. In this section, we employ a similar idea to study maximal cliques. Assuming that a given clique is not maximal, if we can extend the clique and construct a maximal clique with a subspace structure, then it is possible that we can apply character sum estimates to derive a contradiction. 

For generalized Paley graphs, this approach is made possible via the following lemma, established by the second author \cite{Yip3}.

\begin{lem}[\cite{Yip3}*{Corollary 3.1}] \label{maximal}
Let $GP(q,d)$ be a generalized Paley graph. If $K$ is a proper subfield such that $K$ forms a clique in $GP(q,d)$, then there is a $K$-subspace $\mathcal{V}$ of $\F_q$, such that $K \subset \mathcal{V}$, and $\mathcal{V}$ forms a maximal clique in $GP(q,d)$. 
\end{lem}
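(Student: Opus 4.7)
The plan is to run a straightforward maximality argument at the level of $K$-subspaces, without using character sums. Let $\mathcal{F}$ denote the collection of $K$-subspaces $V$ of $\F_q$ such that $K \subseteq V$ and $V$ forms a clique in $GP(q,d)$. By hypothesis $K$ itself lies in $\mathcal{F}$, so $\mathcal{F}$ is nonempty; it is also finite because every member sits inside the finite-dimensional $K$-vector space $\F_q$. I would pick an inclusion-maximal element $V \in \mathcal{F}$ and claim that $V$ is automatically a maximal clique in $GP(q,d)$.

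Supposing for contradiction that $V$ is not a maximal clique, there exists $w \in \F_q \setminus V$ with $V \cup \{w\}$ still a clique, and the goal becomes to show that the strictly larger $K$-subspace $V' \colonequals V + Kw$ is also a clique; this would contradict the maximality of $V$ in $\mathcal{F}$. For two distinct elements $v_1 + k_1 w$ and $v_2 + k_2 w$ of $V'$ (with $v_i \in V$, $k_i \in K$), the difference is $(v_1 - v_2) + (k_1 - k_2) w$. When $k_1 = k_2$, this lies in $V \setminus \{0\}$ and is a $d$-th power because $V$ is already a clique. When $k_1 \neq k_2$, set $k \colonequals k_1 - k_2 \in K^*$ and $v \colonequals k^{-1}(v_1 - v_2) \in V$, where the membership in $V$ uses the $K$-subspace structure; the difference then factors as $k(v + w)$.

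From here the verification needs two ingredients. First, $k \in K^* \subseteq (\F_q^*)^d$, because the clique hypothesis on $K$ applied to the pair $k, 0 \in K$ shows every nonzero element of $K$ is a $d$-th power. Second, since $V$ is a subspace, $-v \in V$, so $v + w = w - (-v)$ is the difference of two distinct vertices of the clique $V \cup \{w\}$ (distinct because $w \notin V$), and is therefore also a $d$-th power. The product $k(v + w)$ is thus a $d$-th power in $\F_q^*$, which finishes showing $V'$ is a clique and closes the contradiction.

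The one point to get right is the second case: one needs simultaneously the $K$-subspace structure of $V$ (so that $v_1 - v_2$ can be rescaled by $k^{-1}$ and the result $v$ still lies in $V$, and so that $-v \in V$) and the fact that $K^* \subseteq (\F_q^*)^d$ (so that the scalar factor $k$ does not obstruct the product from being a $d$-th power). Both are direct consequences of the hypotheses, so no obstacle should arise, and the entire proof is elementary.
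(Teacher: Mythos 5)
Your proof is correct and complete: the greedy extension from an inclusion-maximal $K$-subspace clique, together with the two observations that $K^*\subseteq(\F_q^*)^d$ and that $(\F_q^*)^d$ is multiplicatively closed (so $k(v+w)$ is a $d$-th power), is exactly the argument needed, and you correctly verify the nondegeneracy points ($w\notin V$ gives $w\neq -v$ and $V+Kw\supsetneq V$). Note that the paper itself gives no proof of this lemma but imports it from \cite{Yip3}*{Corollary 3.1}; your argument is essentially the same subspace-extension argument used there, so nothing further is needed.
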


Our aim is to provide partial progress towards the following main conjecture in \cite{Yip3}:
\begin{conj}[\cite{Yip3}*{Conjecture 1.4}]\label{maxconj}
Let $d$ be a positive integer greater than $1$. Let $q \equiv 1 \pmod {2d}$ be a power of a prime $p$, and let $r$ be the largest integer such that $\F_{p^r}$ is a subfield of $\F_q$ and $d \mid \frac{q-1}{p^r-1}$. Then the subfield $\F_{p^r}$ forms a maximal clique in $GP(q,d)$.
\end{conj}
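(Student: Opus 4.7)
The plan is to reduce the maximality question to a character sum problem via Lemma~\ref{maximal}, and then apply the estimates developed in Section~\ref{charsumsection}. First, the hypothesis $d \mid (q-1)/(p^r-1)$ ensures $\F_{p^r}^* \subseteq (\F_q^*)^d$, so $\F_{p^r}$ is indeed a clique in $GP(q,d)$. Suppose for contradiction that it is not maximal; by Lemma~\ref{maximal}, there exists a maximal clique $V$ in $GP(q,d)$ which is an $\F_{p^r}$-subspace containing $\F_{p^r}$, with $m \colonequals \dim_{\F_{p^r}} V \geq 2$. Letting $\chi$ be a multiplicative character of $\F_q$ of order exactly $d$, the inclusion $V \setminus \{0\} \subseteq (\F_q^*)^d = \ker \chi$ yields the key identity
\[
\sum_{v \in V} \chi(v) = |V| - 1 = p^{rm} - 1.
\]

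Writing $q = p^n$, let $s$ be minimal with $V \subseteq \F_{p^{rs}}$; then $m \leq s \leq n/r$. I would then split into two cases. If $V$ is itself a subfield, then $V = \F_{p^{rm}}$, and the clique condition forces $d \mid (q-1)/(p^{rm}-1)$, contradicting the maximality of $r$ since $rm>r$. Otherwise $m<s$, and a dimension count analogous to the proof of Corollary~\ref{charsumcor} guarantees that $V$ contains an element of degree $s$ over $\F_{p^r}$. If the restriction $\chi|_{\F_{p^{rs}}^*}$ happens to be trivial, then $\F_{p^{rs}}^* \subseteq \ker \chi = (\F_q^*)^d$, again contradicting the maximality of $r$. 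If instead this restriction is non-trivial, Theorem~\ref{Reis} applied with base field $\F_{p^r}$ inside the ambient $\F_{p^{rs}}$ gives
\[
\bigl| \textstyle\sum_{v \in V} \chi(v) \bigr| < s \cdot (p^r)^{m-1/2},
\]
which is incompatible with the identity above once $p^r$ is sufficiently large compared to $s^2$.

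The main obstacle is exactly the $p$-dependence that already limits Theorem~\ref{subfieldthm}: the character sum estimate only defeats the trivial lower bound $|V|-1$ when $p^r > C (n/r)^2$, so this strategy establishes the conjecture only for sufficiently large characteristic relative to $n/r$. Closing the gap in small characteristic is the hard part, and would seem to require either sharper character sum bounds over $\F_{p^r}$-subspaces that escape the subfield obstruction, or a more delicate structural analysis of maximal cliques. In the special case addressed by Theorem~\ref{conjmaxc}, one has $n/r = 4$ and the putative extension $V$ has $m \in \{2,3\}$, so the low ambient dimension makes direct case-by-case analysis of two- and three-dimensional $\F_q$-subspaces of $\F_{q^4}$ feasible (supported by Corollary~\ref{charsumcor} and the algebraic structure of $4$th powers), thereby bypassing the need for a large-$p$ hypothesis.
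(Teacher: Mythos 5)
The statement you were asked to prove is a \emph{conjecture} (Conjecture~\ref{maxconj}); the paper does not prove it in general, and neither do you, as you candidly acknowledge. What the paper offers is partial progress: Theorem~\ref{maximalGP} handles the case where the relevant extension degree is an odd prime $n$ with $q>n^2$, and Theorem~\ref{conjmaxc} settles a Peisert-graph analogue of the quartic case. Your outline follows essentially the same strategy as those partial results: invoke Lemma~\ref{maximal} to replace a hypothetically non-maximal $\F_{p^r}$ by a maximal clique $V$ that is an $\F_{p^r}$-subspace of dimension $m\geq 2$, observe that a character $\chi$ of exact order $d$ equals $1$ on $V\setminus\{0\}$ so that $\sum_{v\in V}\chi(v)=|V|-1$, and contradict this with the Katz--Reis bound. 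Your case split (subfield obstruction ruled out by maximality of $r$; otherwise Theorem~\ref{Reis} applied over the base field $\F_{p^r}$ inside the field generated by $V$) is the natural generalization of Corollary~\ref{prime}, and your diagnosis that the method only succeeds when $p^r$ is large compared to $(n/r)^2$ is exactly the limitation visible in the hypothesis $q>n^2$ of Theorem~\ref{maximalGP}.

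Two corrections to the details. First, the assertion that $V$ contains an element of degree $s$ over $\F_{p^r}$ is not a bare ``dimension count'' when $m$ is small relative to $s$: you should argue that otherwise $V$ would be the union of its intersections with the maximal proper subfields $\F_{p^{rs/\ell}}$ for primes $\ell\mid s$, each a proper subspace of $V$ by minimality of $s$, and a vector space over $\F_{p^r}$ cannot be covered by fewer than $p^r+1$ proper subspaces; this is harmless in the regime $p^r>(n/r)^2$ where your argument operates, but it is not automatic (the counting in Corollary~\ref{charsumcor} relies on $m=s/2$). Second, your closing remark misrepresents how Theorem~\ref{conjmaxc} is proved: there the reduction (Theorem~\ref{maxP*}) forces $m=2$ only, the same character sum bound (Corollary~\ref{charsumcor}) disposes of $q\geq 33$, and the finitely many remaining values of $q$ are checked by a SageMath computation --- the large-$q$ hypothesis is not bypassed by a structural analysis of two- and three-dimensional subspaces but absorbed into a finite verification. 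In particular, the paper offers no route around the small-characteristic obstruction for the general conjecture either, so your proposal should be read as a correct account of the state of the art rather than a proof.
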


Conjecture \ref{maxconj} has been shown to be true for cubic Paley graphs with cubic order and quadruple Paley graphs with quartic order in \cite{Yip3}. We establish the following special case of Conjecture \ref{maxconj}.
\begin{thm}\label{maximalGP}
Let $d$ be a positive integer greater than $1$. Let $n$ be an odd prime. If $q>n^2$ and $d \mid \frac{q^n-1}{q-1}$, then the subfield $\F_q$ forms a maximal clique in $GP(q^n,d)$.
\end{thm}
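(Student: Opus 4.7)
The plan is to combine the subspace-extension Lemma~\ref{maximal} with the character sum estimate in Corollary~\ref{prime}. I would first observe that since $d \mid (q^n-1)/(q-1)$, every element of $\F_q^*$ is a $d$-th power in $\F_{q^n}^*$, so $\F_q$ does form a clique in $GP(q^n,d)$; the remaining task is to prove maximality.

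I would then argue by contradiction: suppose $\F_q$ is not a maximal clique. Since $n\geq 3$ is prime, $\F_q$ is a \emph{proper} subfield of $\F_{q^n}$, so Lemma~\ref{maximal} applied with $K=\F_q$ yields an $\F_q$-subspace $V \subseteq \F_{q^n}$ containing $\F_q$ that is a maximal clique in $GP(q^n,d)$. Since $\F_q$ itself is assumed not maximal, $V \supsetneq \F_q$, forcing $\dim_{\F_q} V \geq 2$. I would then pick any $\F_q$-subspace $W \subseteq V$ with $\F_q \subseteq W$ and $\dim_{\F_q} W = 2$; this $W$ inherits the clique property from $V$, and $|W|=q^2$.

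The final step is the contradiction via character sums. Because $W$ is a clique containing $0$, every nonzero $w \in W$ is itself a $d$-th power in $\F_{q^n}^*$. Fix a nontrivial multiplicative character $\chi$ of $\F_{q^n}$ of order $d$ (which exists since $d\mid q^n-1$). Then $\chi(w)=1$ for every $w \in W \setminus \{0\}$, giving
\[
\sum_{w \in W} \chi(w) = |W|-1 = q^2-1.
\]
On the other hand, $W$ satisfies every hypothesis of Corollary~\ref{prime}: it is a $2$-dimensional $\F_q$-subspace of $\F_{q^n}$ containing $1$, $n$ is an odd prime, and $q>n^2$. Thus
\[
\left|\sum_{w \in W} \chi(w)\right| < |W|-1 = q^2-1,
\]
contradicting the previous display and completing the proof.

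The hard part, such as it is, lies in assembling the ingredients rather than executing them: Lemma~\ref{maximal} produces the subspace structure on which the character sum tool can act, while Corollary~\ref{prime} supplies the crucial strict bound $|\sum_{w\in W}\chi(w)|<|W|-1$. The primality of $n$ is used twice, ensuring both that $\F_q$ is a proper subfield of $\F_{q^n}$ and that every element of $W \setminus \F_q$ automatically has degree $n$ over $\F_q$ (the hypothesis behind Corollary~\ref{prime}); this is precisely what confines the argument to prime exponents.
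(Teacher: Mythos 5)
Your proposal is correct and follows essentially the same route as the paper: verify $\F_q$ is a clique from $d \mid \frac{q^n-1}{q-1}$, use Lemma~\ref{maximal} to extract a $2$-dimensional $\F_q$-subspace clique containing $\F_q$, and contradict Corollary~\ref{prime} with a character of order $d$. Your explicit step of passing from the maximal clique $V$ to a $2$-dimensional subspace $W$ is a slightly more careful rendering of what the paper states directly.
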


\begin{proof}
Since $q$ is an odd prime power,  $d \mid \frac{q^n-1}{q-1}$ implies that $q^n \equiv 1 \pmod {2d}$, and thus $GP(q^n,d)$ is a well-defined $d$-Paley graph. It is easy to verify that the condition $d \mid \frac{q^n-1}{q-1}$ guarantees that $\F_q$ forms a clique in $GP(q^n,d)$; see \cite{BDR}*{Theorem 1}.

Suppose that $\F_q$ is not a maximal clique in $GP(q^n,d)$. Then Lemma \ref{maximal} implies that there is a clique $\mathcal{V}$ in $GP(q^n,d)$, such that $\mathcal{V}$ is a 2-dimensional $\F_q$-subspace, and $\F_q \subset \mathcal{V}$. Therefore, if we let $\chi$ to be a multiplicative character of $\F_{q^n}$ with order $d$, then 
\begin{equation} 
\left|\sum_{v \in \mathcal{V}} \chi(v)\right|=|\mathcal{V}|-1,    
\end{equation}
which violates inequality \eqref{summ} in Corollary \ref{prime}, provided $q>n^2$. This shows that $\F_q$ is a maximal clique in $GP(q^n,d)$.
\end{proof}

To prove Theorem \ref{conjmaxc}, we use a similar strategy. We need the following result from \cite{Yip3}, which explores the connection between maximal cliques and maximum cliques in a Peisert graph of order $q^4$.

\begin{thm}[\cite{Yip3}*{Theorem 1.7}]\label{maxP*}
Let $q$ be a power of a prime $p \equiv 3 \pmod 4$.  If $\F_{q}$ is not a maximal clique in the Peisert graph of order $q^4$, then $\omega(P^*_{q^4})=q^2$; moreover, there exists $h \in \F_{q^4} \setminus \F_{q}$, such that $\F_{q} \oplus h\F_{q}$ forms a maximum clique. 
\end{thm}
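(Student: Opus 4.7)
The plan is a two-part argument: first, use the non-maximality hypothesis to explicitly construct a clique of size $q^2$ of the form $\F_q\oplus h\F_q$; second, invoke the Hoffman-Delsarte ratio bound to conclude that this clique is in fact maximum. The key algebraic input is that the Peisert connection set $M_{q^4}$ is a union of two cosets of the fourth power subgroup $(\F_{q^4}^*)^4$, and that $\F_q^*$ sits inside $(\F_{q^4}^*)^4$.

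For the construction, the hypothesis that $\F_q$ is not a maximal clique produces some $h \in \F_{q^4}\setminus\F_q$ with $\F_q\cup\{h\}$ still a clique, so $h-a\in M_{q^4}$ for every $a\in\F_q$. Since $P^*_{q^4}$ is undirected, $M_{q^4}=-M_{q^4}$, and this is equivalent to $h+t\in M_{q^4}$ for every $t\in\F_q$. I would then claim that $V:=\F_q+h\F_q=\{a+bh:a,b\in\F_q\}$ is a clique. Since $h\notin\F_q$, the pair $\{1,h\}$ is $\F_q$-linearly independent, hence $|V|=q^2$. For a nonzero difference $\alpha+\beta h$ with $\alpha,\beta\in\F_q$: if $\beta=0$, then $\alpha\in\F_q^*\subset(\F_{q^4}^*)^4\subset M_{q^4}$, where the inclusion $\F_q^*\subset(\F_{q^4}^*)^4$ follows from $q^3+q^2+q+1\equiv 0\pmod 4$, an elementary congruence using $q\equiv 3\pmod 4$. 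If $\beta\neq 0$, factor $\alpha+\beta h=\beta(h+\alpha/\beta)$; the factor $h+\alpha/\beta$ lies in $M_{q^4}$ by the reformulation above, and multiplication by $\beta\in\F_q^*\subset(\F_{q^4}^*)^4$ preserves $M_{q^4}$ since $M_{q^4}$ is a union of $(\F_{q^4}^*)^4$-cosets. Hence $\beta(h+\alpha/\beta)\in M_{q^4}$, proving $V$ is a clique of size $q^2$.

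For the upper bound, $P^*_{q^4}$ is a strongly regular conference graph sharing the parameters $\bigl(q^4,(q^4-1)/2,(q^4-5)/4,(q^4-1)/4\bigr)$ of the Paley graph of the same order. The Hoffman-Delsarte ratio bound for conference graphs on $n$ vertices yields $\omega\leq\sqrt{n}$, so $\omega(P^*_{q^4})\leq q^2$. Combined with the explicit $q^2$-clique built above, this forces $\omega(P^*_{q^4})=q^2$ and identifies $\F_q\oplus h\F_q$ as a maximum clique of the form required in the statement.

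The main obstacle is the multiplicative-closure verification that $V$ is a clique. Although $M_{q^4}$ is \emph{not} itself closed under multiplication — a feature that distinguishes Peisert graphs from Paley graphs, and precisely the point noted by Mullin explaining why Sziklai's proof does not extend in a straightforward manner — it is closed under multiplication by $(\F_{q^4}^*)^4$, and the containment $\F_q^*\subset(\F_{q^4}^*)^4$ is exactly strong enough to absorb the scalar factor $\beta$ in the factorization $\alpha+\beta h=\beta(h+\alpha/\beta)$. Once this bookkeeping is in place, the rest of the argument is immediate.
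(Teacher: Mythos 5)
Your argument is correct. Note that the paper does not prove this statement itself --- it is imported as Theorem 1.7 of \cite{Yip3} --- and your proof is essentially the one given there: the congruence $q^3+q^2+q+1\equiv 0\pmod 4$ for $q\equiv 3\pmod 4$ puts $\F_q^*$ inside $(\F_{q^4}^*)^4$, which is exactly what lets the coset structure of $M_{q^4}$ absorb the scalar $\beta$ in $\alpha+\beta h=\beta(h+\alpha/\beta)$ and makes $\F_q\oplus h\F_q$ a clique of size $q^2$. One small remark on the upper bound: your appeal to the ratio bound requires knowing that $P^*_{q^4}$ is strongly regular (a conference graph), which is true but needs the self-complementary plus symmetric properties from Peisert's paper; a slightly more economical route to $\omega(P^*_{q^4})\le q^2$ is the clique--coclique bound $\omega(X)\alpha(X)\le n$ for vertex-transitive graphs combined with self-complementarity, which gives $\omega(X)^2\le q^4$ directly.
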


We now have the tools to prove Theorem \ref{conjmaxc}.

\begin{proof}[Proof of Theorem~\ref{conjmaxc}]
Let $g$ be a primitive root of $\F_{q^4}$. Let $\chi$ be the multiplicative character of $\F_{q^4}$ such that $\chi(g)=i$. It follows that for any $k \in \N$, $\chi(g^{4k})=1$, and $\chi(g^{4k+1})=i$. 

Suppose that $\F_{q}$ is not a maximal clique in the Peisert graph $P^*_{q^4}$. By Theorem \ref{maxP*}, there exists $h \in \F_{q^4} \setminus \F_{q}$, such that $\mathcal{V}=\F_{q} \oplus h\F_{q}$ forms a maximum clique. In particular, for each $x \in \mathcal{V}\setminus \{0\}$, we have $\chi(x)=i$ or $\chi(x)=1$.
Consequently,
\[
\sum_{x \in \mathcal{V}} \chi(x)=a+bi,
\]
where $a,b$ are non-negative integers such that $a+b=|\mathcal{V}|-1$. Therefore,
\begin{equation}\label{eqq1}
\bigg|\sum_{x \in \mathcal{V}} \chi(x)\bigg|=\sqrt{a^2+b^2} \geq \sqrt{(a+b)^2/2}=\frac{|\mathcal{V}|-1}{\sqrt{2}}=\frac{q^2-1}{\sqrt{2}}.    
\end{equation}
On the other hand, $\mathcal{V}$ is an  $\F_{q}$-subspace with dimension $2$. Moreover, note that $g^{q^2+1}$ is a primitive root of the subfield $\F_{q^2}$; in particular, since $q^2+1 \equiv 2 \pmod 4$, the subfield $\F_{q^2}$ does not form a clique in the Peisert graph of order $q^4$. Therefore, $\mathcal{V} \neq \F_{q^2}$. By Corollary \ref{charsumcor},
\begin{equation}\label{eqq2}
\bigg|\sum_{x \in \mathcal{V}} \chi(x)\bigg| < 4q^{3/2}.
\end{equation}
Combining inequalities \eqref{eqq1} and \eqref{eqq2}, we must have $q<33$. This implies that when $q \geq 33$, $\F_{q}$ is a maximal clique in the Peisert graph of order $q^4$. 

Finally, in \cite{Yip3}, using SageMath, we have verified that for each $q \in \{7,9,11,19,23,27,31\}$, $\F_{q}$ is a maximal clique in the Peisert graph of order $q^4$. 
\end{proof}

\begin{rem}
The assumption $q>3$ is necessary in Theorem~\ref{conjmaxc}. Indeed, one can check that when $q=3$, the subfield $\F_3$ is not maximal in the Peisert graph of order $81$. Indeed, there is a clique which is a two-dimensional $\F_3$-space containing $\F_3$. 
\end{rem}

We conclude the section with a similar statement regarding maximal cliques in a generalized Peisert graph.

\begin{thm}
Let $d \geq 4$ be a positive even integer. Let $n$ be an odd prime, and $q$ be an even power of an odd prime $p$. If $q>1.03n^2d^4/\pi^2(d-1)^2$ and $d \mid \frac{q^n-1}{q-1}$, then the subfield $\F_q$ forms a maximal clique in $GP^*(q^n,d)$.
\end{thm}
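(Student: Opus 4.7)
The plan is to combine the extension-to-subspace approach of Theorem~\ref{maximalGP} with the Peisert-style character-sum pinch from the proof of Theorem~\ref{conjmaxc}. First, I would establish the generalized Peisert analog of Lemma~\ref{maximal}: if $\F_q$ is a clique in $GP^*(q^n,d)$ that is not maximal, then there is an element $x \in \F_{q^n} \setminus \F_q$ for which $V \colonequals \F_q + \F_q x$ is itself a clique. The divisibility assumption $d \mid (q^n-1)/(q-1)$ is equivalent to $\F_q^* \subset (\F_{q^n}^*)^d$, so multiplication by any $c \in \F_q^*$ permutes each coset $g^i (\F_{q^n}^*)^d$ to itself and hence preserves the connection set $S$. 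With this in hand, any nonzero difference of elements of $V$ either lies in $\F_q^* \subset S$ outright, or after pulling out a scalar in $\F_q^*$ reduces to $x + y$ for some $y \in \F_q$, which belongs to $S$ by our hypothesis on $x$.

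Next, I would set $\chi$ to be the multiplicative character of $\F_{q^n}$ with $\chi(g) = \exp(2\pi i/d)$, where $g$ is a primitive root of $\F_{q^n}^*$; on the connection set $S$ the character $\chi$ takes values in $\{\theta^j : 0 \leq j \leq d/2-1\}$ with $\theta = \exp(2\pi i/d)$. Lemma~\ref{GP*lem} combined with the inclusion $V \setminus \{0\} \subset S$ then yields the lower bound
\[
\Bigl|\sum_{v \in V} \chi(v)\Bigr| \;\geq\; \frac{\pi(d-1)}{d^2}(q^2-1).
\]
Since $n$ is an odd prime, the only subfields of $\F_{q^n}$ are $\F_q$ and $\F_{q^n}$ itself, so $V$, being strictly larger than $\F_q$ and strictly smaller than $\F_{q^n}$, must contain an element of degree $n$ over $\F_q$. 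Theorem~\ref{Reis} then supplies the matching upper bound $\bigl|\sum_{v \in V}\chi(v)\bigr| < n q^{3/2}$. Combining these estimates yields $q(1 - q^{-2})^2 < n^2 d^4 / (\pi^2(d-1)^2)$, and the numerical slack in the constant $1.03$ from the hypothesis is calibrated precisely to absorb the correction factor $(1 - q^{-2})^{-2}$, delivering the desired contradiction.

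The main obstacle I anticipate is the first step: unlike in the generalized Paley case, the connection set of $GP^*(q^n, d)$ is only a union of cosets of the subgroup of $d$-th powers rather than a subgroup itself, so the multiplicative-invariance argument underpinning Lemma~\ref{maximal} must be re-derived at the coset level. Once coset-invariance under scalar multiplication by $\F_q^*$ is secured, the remainder of the proof proceeds in parallel with Theorem~\ref{conjmaxc}, with the $\varepsilon$-lower bound constant $1/\sqrt{2}$ there replaced by $\pi/d - \pi/d^2$ from Lemma~\ref{GP*lem}.
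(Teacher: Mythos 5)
Your proposal is correct and follows essentially the same route as the paper: extend a hypothetically non-maximal $\F_q$ to a two-dimensional $\F_q$-subspace clique $V=\F_q\oplus \F_q x$, bound $\bigl|\sum_{v\in V}\chi(v)\bigr|$ from below by $(\pi/d-\pi/d^2)(q^2-1)$ via Lemma~\ref{GP*lem} and from above by $nq^{3/2}$ via Theorem~\ref{Reis} (the degree-$n$ hypothesis being automatic since $n$ is prime), and compare. Your direct coset-level verification that $\F_q+\F_q x$ is a clique is a clean, self-contained substitute for the paper's appeal to the proof of Theorem~\ref{maxP*}, and your numerical bookkeeping with the constant $1.03$ matches the paper's (since $q\ge 9$ gives $(1-q^{-2})^{-2}\le(81/80)^2<1.03$).
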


The proof is similar to the proof of Theorem \ref{GP*thm} and Theorem \ref{maximalGP}, except that we need to establish the following variant of Theorem \ref{maxP*} for $GP^*(q^n,d)$ (which is straightforward by following the proof of Theorem \ref{maxP*} step by step), and use Lemma \ref{GP*lem} to get a lower bound on the character sum estimate.

\begin{prop}
Let $d \geq 4$ be a positive even integer. Let $n$ be an odd prime, and $q$ be an even power of an odd prime $p$. If $d \mid \frac{q^n-1}{q-1}$, and if the subfield $\F_q$ is not a maximal clique in $GP^*(q^n,d)$, then  there exists $h \in \F_{q^n} \setminus \F_q$, such that $\F_q \oplus h\F_q$ forms a clique in $X$. 
\end{prop}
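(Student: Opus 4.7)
My plan is to mimic the proof of Theorem \ref{maxP*}, exploiting the multiplicative structure of the connection set $M_{q^n,d}$ of the generalized Peisert graph. Since $\F_q$ is assumed not to be a maximal clique in $GP^*(q^n,d)$, I would first extract an element $h\in \F_{q^n}\setminus \F_q$ such that $\F_q\cup \{h\}$ remains a clique; unfolding the definition, this gives $h-a\in M_{q^n,d}$ for every $a\in \F_q$. Substituting $a=-c$ (valid because $\F_q$ is closed under negation), we obtain the equivalent statement $h+c\in M_{q^n,d}$ for every $c\in \F_q$.

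The key structural fact I would exploit is that $M_{q^n,d}=\bigsqcup_{j=0}^{d/2-1} g^{j}(\F_{q^n}^*)^d$ is a disjoint union of cosets of the subgroup $(\F_{q^n}^*)^d$. The divisibility hypothesis $d\mid \frac{q^n-1}{q-1}$ is equivalent to the containment $\F_q^*\subset (\F_{q^n}^*)^d$, so in particular $\F_q^*\subset M_{q^n,d}$, and every coset of $(\F_{q^n}^*)^d$ is preserved under multiplication by elements of $\F_q^*$.

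To verify that $\F_q\oplus h\F_q$ is a clique I would take two distinct elements $a_1+hb_1$ and $a_2+hb_2$ of $\F_q\oplus h\F_q$ and analyze their difference $(a_1-a_2)+h(b_1-b_2)$. If $b_1=b_2$, the difference lies in $\F_q^*\subset M_{q^n,d}$. Otherwise, setting $e=b_1-b_2\in \F_q^*$ and $c=(a_1-a_2)/e\in \F_q$, the difference equals $e(c+h)$; the factor $c+h$ lies in $M_{q^n,d}$ by the first paragraph, and multiplication by $e\in \F_q^*$ preserves the coset of $(\F_{q^n}^*)^d$ in which $c+h$ lies, so $e(c+h)\in M_{q^n,d}$ as well.

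The only technical obstacle is routine bookkeeping: verifying that $M_{q^n,d}$ is a union of cosets of $(\F_{q^n}^*)^d$ and that $\F_q^*\subset (\F_{q^n}^*)^d$ under the divisibility hypothesis. Both assertions follow from the cyclic structure of $\F_{q^n}^*$ together with the definition of $GP^*(q^n,d)$, so the argument is essentially a step-by-step adaptation of the original Peisert-graph proof alluded to in the paper.
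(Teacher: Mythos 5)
Your argument is correct and is exactly the ``step-by-step adaptation of the proof of Theorem~\ref{maxP*}'' that the paper invokes without writing out: the two key facts you isolate --- that $M_{q^n,d}$ is a union of cosets of $(\F_{q^n}^*)^d$ and that $d \mid \frac{q^n-1}{q-1}$ forces $\F_q^* \subset (\F_{q^n}^*)^d$, so multiplication by $\F_q^*$ preserves $M_{q^n,d}$ --- are precisely what makes the Peisert-graph argument carry over. The coset bookkeeping and the reduction of a general difference $(a_1-a_2)+h(b_1-b_2)$ to $e(c+h)$ with $e\in\F_q^*$ are all sound, so no gap remains.
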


\section*{Acknowledgements}
The authors thank Greg Martin, Daniel Panario, Zinovy Reichstein, J\'ozsef Solymosi, and Joshua Zahl for helpful discussions. The authors are also grateful to the referees for valuable comments, corrections, and suggestions. The research of the first author was supported by a postdoctoral research fellowship from the University of British Columbia. The research of the second author was supported in part by a Four Year Doctoral Fellowship from the University of British Columbia.

\bibliographystyle{alpha}
\bibliography{biblio.bib}

\end{document}